\theoremstyle{plain}
\newtheorem{theorem}[subsection]{Theorem}
\newtheorem{proposition}[subsection]{Proposition}
\newtheorem{lemma}[subsection]{Lemma}
\newtheorem{corollary}[subsection]{Corollary}
\theoremstyle{definition}
\newtheorem{example}[subsubsection]{Example}
\newtheorem{remark}[subsection]{Remark}
\newcommand{\idealm}{\mathfrak{m}}
\title{Cubic Surfaces of Characteristic Two}
\author{Zhibek Kadyrsizova, Jennifer Kenkel, Janet Page, Jyoti Singh, \protect\\
Karen E. Smith, Adela Vraciu,  and Emily E. Witt}
\email{zhibek.kadyrsizova@nu.edu.kz, jke295@g.uky.edu, jrpage@umich.edu, jyotijagrati@gmail.com, \\ kesmith@umich.edu, vraciu@math.sc.edu, witt@ku.edu}
\thanks{This project was partially supported by the  National Science Foundation (grant number  1934391),  the Banff International Research Station (workshop 19w5104), and the Association for Women in Mathematics (grant number NSF-HRD 1500481). In addition, Jyoti Singh was partially supported by SERB(DST) grant number ECR/2017/000963, Karen E. Smith was partially supported by NSF grant numbers 1801697, 1952399, and 2101075, and Emily Witt was partially supported by  NSF CAREER grant 1945611.}
\begin{document}
\maketitle

\begin{abstract}{Cubic surfaces in characteristic two are investigated from the point of view of prime characteristic commutative algebra.  In particular, we prove that, 
the non-Frobenius split cubic surfaces form a linear subspace of codimension four in the 19-dimensional space of all cubics, and that up to projective equivalence, there are finitely many non-Frobenius split cubic surfaces.  We explicitly describe defining equations for each and characterize them  as extremal in terms of configurations of lines on them. In particular,  a (possibly singular) cubic surface in characteristic two  fails to be Frobenius split if and only if no three lines on it form a "triangle".}
\end{abstract}

\section{Introduction}

 Cubic surfaces have  fascinated mathematicians for nearly  two centuries, going back at least to Cayley and Salmon's  1849  discovery of their famous twenty-seven lines.
  Yet new discoveries about cubic surfaces continue to emerge.  For example, Dolgachev and Duncan recently described the automorphism groups of smooth cubic surfaces in prime characteristic, including a detailed investigation 
 in the oft-overlooked case where the ground field has  {\it characteristic two} (see \cite{dolgachev-duncan.automorphisms}).

 In this paper, we study cubic surfaces, including the singular ones,  through the special lens of characteristic $p$ commutative algebra,
 focusing on the especially interesting case where the ground field  has  {\it  characteristic two}. 
More specifically,  we study  when  cubic surfaces are Frobenius split (in the sense of \cite{mehta-ramanathan-frobenius-splitting}) or equivalently, when their homogeneous coordinate rings are $F$-pure (in the sense of \cite{HR76}).     We show that the vast majority  of cubic surfaces in characteristic two are  Frobenius split. Indeed,   we  explicitly classify the finitely many  non-$F$-pure cubic surfaces (including the singular ones) up to projective change of coordinates, both by giving explicit equations, and in terms of the configuration of lines on them. (Smooth cubic surfaces of characteristic $p>2$ are always Frobenius split  by  \cite[5.5]{hara.rational-singularities}. See Remark \ref{p>2} for a new proof deducing this from the main theorem of   \cite{beauville.sur_les_hypersurfaces}.)

 To state our  results more precisely, 
 fix an algebraically closed field of characteristic two. 
First,  recall that the set of all cubic surfaces in $\mathbb P^3$ is parametrized by the nineteen-dimensional projective space of all cubic forms in four variables.
Most cubic surfaces in this family are Frobenius split: 
the set of cubic surfaces that are not form a fifteen-dimensional linear subspace of this $\mathbb P^{19}$---hence, a proper Zariski closed set of codimension four.  For details, see Proposition \ref{parameter}.

Next,  up to projective  linear change of coordinates,  we show in Theorem \ref{main} that 
    there are precisely five isomorphism types of  non-Frobenius split  cubic surfaces that are not simply cones over planar cubic curves; these are represented by the following equations: 
     \begin{enumerate}
\item $x_1^3+x_2^3+x_3^3+x_4^3$
\item $ x_1^2x_4 + x_2^3 + x_1 x_3^2$
\item $x_1^2x_4 + x_2^2x_3+ x_1 x_3^2$
\item 
    $ x_1^2x_4 + x_2^3 + x_3^3$
\item $x_1^2x_3  + x_2^2 x_4$
\end{enumerate}

Only the first of these is smooth: there is exactly one smooth cubic surface that is not Frobenius split,  a higher-dimensional analog of the fact that (over an algebraically closed field of characteristic two), there is \emph{only one} supersingular elliptic curve  \cite[p. ~260]{Husemoller}.   Indeed, we  deduce the former from the latter, giving a different proof than in  \cite[5.5]{hara.rational-singularities}.

To complete the classification, we  consider cubic surfaces that are  cones over a cubic curve in $\mathbb P^2$. Again, the generic one is Frobenius split---the non-Frobenius split ones are parametrized by a  hyperplane in the $\mathbb P^9$ of all  plane cubics. Again, up to projective linear changes of coordinates, we establish in Proposition \ref{3variable} and Lemma \ref{2variable} that there are only finitely  many non-Frobenius split ones, which can be enumerated as follows:
  \begin{enumerate}
\item  The unique smooth supersingular elliptic curve (projectively equivalent to the Fermat cubic $x^3+y^3+z^3)$.
\item The cuspidal cubic curve (projectively equivalent to $x^2z + y^3$).
\item A line tangent to a smooth conic (projectively equivalent to $x^2z + xy^2$).
 \end{enumerate}
 In addition to these, there exist three distinct  configurations of lines that are not Frobenius split:
 \begin{enumerate}
 \item Three different lines meeting at one point
 (projectively equivalent to $xy(x+y)$).
 \item The union of a line and a double line (projectively equivalent to $x^2y$).
 \item A triple line (projectively equivalent to $x^3$).
 \end{enumerate}
 Note that ``triangles'' of lines---three coplanar lines  that do not meet at a point--- are excluded from the list; such a triangle of lines is always Frobenius split. 
 This observation is crucial to our  characterization of  non-Frobenius split cubic surfaces as those that are \emph{maximally degenerate} from the point of view of the configuration of lines on them.

 More precisely,   we show in Theorem \ref{eckardt}  that a (possibly singular) cubic surface is Frobenius split unless {\it every} pair of  intersecting lines on it  meets in an Eckardt point (or consists of double or triple lines in the singular case).    In the smooth case we can say simply that a smooth cubic surface is Frobenius split unless it contains {\it no triangles.}  This corollary  in the smooth case  also follows by combining  \cite[5.5]{hara.rational-singularities} and  \cite[1.1]{Homma};
 see also \cite[20.2]{Hirschfeld}.

  Our method for studying non-$F$-pure cubics in characteristic two can be described as {\it linear algebraic}, different from the algebro-geometric or commutative algebraic approaches of \cite{Hirschfeld}, \cite{Homma} and \cite{hara.rational-singularities}. We show that each such cubic form can be represented by a unique matrix, and we can explicitly describe the action of the group of coordinate changes on such forms in terms of this matrix; see Section \ref{linalg}. Related techniques are considered by Lang  in his study of algebraic groups over finite fields; see \cite{lang.algebraic_groups}. 
  
  {} 
  We work over a fixed algebraically closed field $k$ of characteristic two, except where otherwise indicated.
  
  {}
  \noindent
  {\bf Acknowledgements.} This paper began at a weeklong research workshop called {\it Women in Commutative Algebra}  at the Banff International Research Station in October 2019 and partially funded by US NSF   and the AWM.  The fifth author is grateful to the organizers of the conference  {\it Del Pezzo surfaces and Fano varieties} 
the previous July at Heinrich-Heine-Universit\"at in D\"usseldorf, whose stimulating environment inspired this project idea. In particular, the fifth author acknowledges several fruitful and lively discussions with Igor Dolgachev and with Damiano Testa there. She also thanks Izzet Coskun for suggesting the first proof of Corollary \ref{EckSmooth}, and S\'andor Kov\'acs for a helpful conversation about Remark \ref{p>2} and for supplying the reference \cite{MumfordSuominen}. All authors also acknowledge discussions with Elo\'isa Grifo, who took part in early discussions about the project.

 \section{Definitions and Preliminary Material}\label{prelim}
 
 A map of commutative rings $A\rightarrow B$ is {\bf pure} if the induced map $M\rightarrow M\otimes_A B$ is injective for all $A$-modules $M$. For example, a map of rings 
 $A\rightarrow B$ is pure if it splits as a map of $A$-modules.  In fact, purity is equivalent to splitting  if $A\rightarrow B$ is finite and $A$ is Noetherian \cite[5.3, 5.5]{HR76}, although  in general, purity of a map is a  weaker  condition.

 For a  commutative ring $R$ of prime characteristic $p$, the Frobenius map   is the ring homomorphism  $R\rightarrow R$ sending each element to its $p$-th power. 
 We say that $R$ is {\bf  $F$-pure}  if its Frobenius map is pure. While formally defined by Hochster and Roberts in \cite{HR76}, 
 $F$-purity  played a starring role in their famous proof of the Cohen-Macaulayness for rings of invariants \cite{hochster+roberts-rings-of-invariants}.

For a scheme $X$ over a field of prime characteristic $p$, we also have a Frobenius map---the scheme  map $X\overset{F}\longrightarrow X$ that is the identity map on the underlying topological space, but the $p$-th power map on functions on each open set. The scheme $X$ is {\bf Frobenius split} if 
 the corresponding map of sheaves $\mathcal O_X\rightarrow F_*\mathcal O_X$ splits. The term ``Frobenius splitting'' was coined by Mehta and Ramanathan, who used it masterfully to prove vanishing theorems for cohomology of sheaves on Schubert varieties. 
 
 Frobenius splitting for an affine variety $X$ is the same as the $F$-purity of its coordinate ring by the aforementioned result in  \cite[5.3, 5.5]{HR76}, because the Frobenius map is always finite for a finitely generated algebra over  an algebraically closed field.
 Likewise,
  Frobenius splitting for  a projective variety $X$  is equivalent to the $F$-purity of  any (equivalently, every) homogeneous coordinate ring for a projectively normal embedding of $X$ into projective space, or more generally, for any (equivalently, every) {\it section ring} of $X$; see \cite[4.2]{smith.vanishing}, \cite[3.10]{smithGlobalF-regularityGITQuotientsFanoVarieties}, or \cite[1.1.14]{BK}.

 In this paper, we are interested in cubic 
 surfaces---subschemes of $\mathbb P^3$ cut out by a single homogeneous polynomial $f$ of degree 3. 
  A cubic surface $X \subset \mathbb P^3$ over a field of characteristic $p$  is Frobenius split if and only if its homogeneous coordinate ring  $k[x_1, x_2, x_3, x_4]/\langle f\rangle$ is $F$-pure.  Although the terms ``$F$-pure'' and ``Frobenius split''  are essentially equivalent in our context and often used interchangeably, we will  use {\it Frobenius split}  when talking about varieties and {\it $F$-pure}  when talking about rings or forms, in keeping with the historical use of these words.
 
   {}
  There is a convenient criterion for $F$-purity  due to Fedder, which we state  only in the special case we need:

\begin{theorem}  \label{fedder}  \cite[1.2]{fedder.F-purity} 
Given a homogeneous polynomial $f$ in $R = k[x_1, \dots, x_n]$, where $k$ is a perfect field of characteristic $p>0$, the graded  ring  $R/\langle f\rangle$ is $F$-pure if and only if 
$$f^{p-1} \notin \idealm^{[p]}$$ where $\idealm =\langle  x_1, \dots, x_n\rangle$  denotes the unique homogeneous maximal ideal of $R$, and $\idealm^{[p]}
=  \langle x_1^p, \dots, x^p_n\rangle$
 is the ideal generated by the $p$-th powers of the elements of $\idealm$. 
\end{theorem}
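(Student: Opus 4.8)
The plan is to deduce the statement from the more general \emph{Fedder Lemma} of \cite{fedder.F-purity}: for a homogeneous ideal $I\subseteq R=k[x_1,\dots,x_n]$, with $k$ perfect of characteristic $p$, the graded ring $R/I$ is $F$-pure if and only if $(I^{[p]}:I)\not\subseteq\idealm^{[p]}$. Once this is available the theorem is immediate: since $R$ is a domain, $hf\in\langle f^{p}\rangle$ forces $h\in\langle f^{p-1}\rangle$, so $\left(\langle f^{p}\rangle:\langle f\rangle\right)=\langle f^{p-1}\rangle$, and hence $R/\langle f\rangle$ is $F$-pure $\iff\langle f^{p-1}\rangle\not\subseteq\idealm^{[p]}\iff f^{p-1}\notin\idealm^{[p]}$.

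So the work is the Fedder Lemma. First I would record that Frobenius $R/I\to R/I$ is module-finite --- its image contains $k[x_1^{p},\dots,x_n^{p}]$, over which $R$ (hence $R/I$) is finite, using that $k$ is perfect --- so by \cite[5.3, 5.5]{HR76} ($F$-purity $=$ $F$-splitting for a module-finite Frobenius on a Noetherian ring), $F$-purity of $R/I$ is the existence of an $R/I$-linear $\varphi\colon F_*(R/I)\to R/I$ with $\varphi(1)=1$. Next, by Kunz's theorem $R$ is regular, so $F_*R$ is a \emph{free} $R$-module with basis $\{F_*(x^{\underline a})\}_{\underline a\in\{0,\dots,p-1\}^{n}}$ (perfectness of $k$ supplies $p$-th roots of scalar coefficients). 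Consequently $\operatorname{Hom}_R(F_*R,R)$ is free of rank one over $F_*R$, generated by the ``top coefficient'' functional $\Phi$ with $\Phi(F_*(x^{\underline a}))=1$ for $\underline a=(p-1,\dots,p-1)$ and $0$ for all other basis monomials; equivalently, every $R$-linear map $F_*R\to R$ has the form $F_*(r)\mapsto\Phi(F_*(gr))$ for a unique $g\in R$.

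The technical heart is the identification
\[
\operatorname{Hom}_{R/I}\!\bigl(F_*(R/I),\,R/I\bigr)\;\cong\;(I^{[p]}:I)/I^{[p]},
\]
where $\bar g$ corresponds to the map $F_*(\bar r)\mapsto\overline{\Phi(F_*(gr))}$. Applying $\operatorname{Hom}_R(-,R/I)$ to the exact sequence $0\to F_*(I)\to F_*R\to F_*(R/I)\to 0$ presents the left side as the set of $R$-linear maps $F_*R\to R/I$ that annihilate $F_*(I)$; matching this against the parametrization above reduces the claim to the elementary fact that, for any ideal $J$ and any $h\in R$, one has $h\in J^{[p]}$ if and only if $\Phi(F_*(x^{\underline a}h))\in J$ for every $\underline a\in\{0,\dots,p-1\}^{n}$. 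This in turn is seen by writing $h$ in the monomial basis as $h=\sum_{\underline b}g_{\underline b}^{\,p}x^{\underline b}$: a direct exponent count gives $\Phi(F_*(x^{(p-1,\dots,p-1)-\underline a}h))=g_{\underline a}$, while $h\in J^{[p]}$ holds exactly when every $g_{\underline b}$ lies in $J$.

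Finally I would assemble the pieces. A splitting exists iff some $g\in(I^{[p]}:I)$ has $\overline{\Phi(F_*g)}$ a unit in $R/I$; since $(I^{[p]}:I)$ and $\idealm^{[p]}$ are homogeneous one may take $g$ homogeneous, and then the condition becomes simply $\Phi(F_*g)\notin\idealm$. On the other hand, applying the elementary fact with $J=\idealm$ and using that $(I^{[p]}:I)$ is an ideal (so that $x^{\underline a}g$ is again available), ``$\Phi(F_*g)\in\idealm$ for every $g$ in an ideal $L$'' is equivalent to ``$L\subseteq\idealm^{[p]}$''. Therefore a splitting exists iff $(I^{[p]}:I)\not\subseteq\idealm^{[p]}$, which is the Fedder Lemma. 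I expect the main obstacle to be exactly this Hom-module identification together with the unit-versus-$\idealm^{[p]}$ translation: fixing the generator $\Phi$, verifying well-definedness modulo $I^{[p]}$, and carrying out the homogeneous reduction are all elementary but require careful multi-exponent bookkeeping; the rest is formal.
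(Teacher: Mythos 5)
The paper does not prove this statement: it is quoted verbatim as Fedder's criterion, with a citation to \cite[1.2]{fedder.F-purity}, so there is no in-paper argument to compare against. Judged on its own terms, your reconstruction is correct and follows the now-standard route to Fedder's lemma: reduce the principal case to $(I^{[p]}:I)\not\subseteq\idealm^{[p]}$ via the colon-ideal computation $(\langle f^p\rangle:\langle f\rangle)=\langle f^{p-1}\rangle$ (valid since $R$ is a domain and $f\neq 0$, an implicit hypothesis worth stating), then use that $F_*R$ is free on the monomials $x^{\underline a}$, $\underline a\in\{0,\dots,p-1\}^n$ (this is where perfectness of $k$ enters), that $\operatorname{Hom}_R(F_*R,R)$ is generated over $F_*R$ by the top-coefficient functional $\Phi$, and the identification $\operatorname{Hom}_{R/I}(F_*(R/I),R/I)\cong (I^{[p]}:I)/I^{[p]}$. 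Your ``elementary fact'' ($h\in J^{[p]}$ iff $\Phi(F_*(x^{\underline a}h))\in J$ for all $\underline a$) is the right pivot and is used correctly both to compute the kernel $I^{[p]}$ and to translate ``$\Phi(F_*g)\in\idealm$ for all $g\in L$'' into $L\subseteq\idealm^{[p]}$. The only points needing the care you already flag are the purity-equals-splitting reduction (covered by \cite[5.3, 5.5]{HR76} since Frobenius is finite over a perfect field) and the graded-versus-local unit issue: in the connected graded setting one must pass to the degree-zero component of a splitting so that ``$\varphi(F_*\bar 1)$ is a unit'' genuinely means $\Phi(F_*g)\notin\idealm$ for a homogeneous $g$. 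This is the same argument as in Fedder's paper, modulo the modern packaging through $F_*R$ and $\Phi$ rather than Fedder's original dual formulation; nothing in your outline would fail.
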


{}
\begin{remark} It is easy to see that the ring $  k[x_1, \dots, x_n]/\langle f \rangle$ is $F$-pure if and only if  $  \overline{k} [x_1, \dots, x_n]/\langle f \rangle$ is $F$-pure, where $\overline{k}$ is the algebraic closure of $k$. This follows from Fedder's criterion immediately since $f^{p-1} \notin \idealm^{[p]}$ in the polynomial ring over $k$ 
 if and only if the same is true over $L$, where $L$ is any field extension of $k$.  Alternatively, this is a special case of the more general fact  \cite[5.13]{HR76}.
\end{remark}

{}
\begin{remark} \label{degen} Fix a form $f \in k[x_1, \dots, x_n]$, which we may also consider as a form in one more variable, $\{x_1, \dots, x_n, x_{n+1}\}$. It is  clear from Fedder's criterion that $  k[x_1, \dots, x_n]/\langle f \rangle$ if $F$-pure if and only if $  k[x_1, \dots, x_n, x_{n+1}]/\langle f \rangle$ is $F$-pure. We will use this in the following more geometric form: if a hypersurface $X\subset \mathbb P^n$  is a cone over some variety $Y$ in a lower dimensional projective space, then $X$ is Frobenius split if and only if $Y$ is  Frobenius split.  

\end{remark}

{}
\begin{example}\label{fedder-elliptic: R}  An elliptic curve is  Frobenius split  if and only if  it  is {\it ordinary,} that is,  {\it not  supersingular.} One way to see this is using  Fedder's criterion: the graded ring $k[x, y, z]/\langle f \rangle$ is $F$-pure if and only if $f^{p-1} \notin \langle x^p, y^p, z^p\rangle$, but when $f$ has degree three, this can  happen if and only if the monomial  $(xyz)^{p-1}$ appears in   $f^{p-1}$ with non-zero coefficient. This recovers the well-known criterion for ordinariness of the elliptic curve defined by $f$  \cite[IV  4.21]{hartshorne-algebraic-geometry}.  See \cite[4.3]{smith.vanishing} for a different proof.
\end{example}

We will also need another result of Fedder 
guaranteeing that $F$-purity ``deforms'' in a Gorenstein ring:
 
 \begin{theorem}  \label{deform}   \cite[3.4]{fedder.F-purity}  Let $(R, \mathfrak m)$ be a Gorenstein local (or standard graded) ring and let $f\in\mathfrak m$ be a regular element. If $R/\langle f\rangle$ is $F$-pure, then $R$ is also $F$-pure.
 \end{theorem}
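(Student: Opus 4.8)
The plan is to reduce to the complete local case and invoke the general (non-hypersurface) form of Fedder's criterion from \cite{fedder.F-purity}. Since $F$-purity of a Noetherian local ring is unaffected by completion, and since for a standard graded ring $F$-purity may be tested at the irrelevant maximal ideal, we may assume $(R,\mathfrak{m})$ is a complete local Gorenstein ring. Cohen's structure theorem then lets us write $R=S/I$ with $(S,\mathfrak{n})$ a complete regular local ring whose residue field is our perfect field $k$, and we lift $f$ to an element $g\in\mathfrak{n}$, so that $R/\langle f\rangle=S/(I+gS)$. By the general form of Fedder's criterion, for any ideal $J\subseteq S$ the ring $S/J$ is $F$-pure if and only if $(J^{[p]}:_S J)\not\subseteq\mathfrak{n}^{[p]}$. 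It therefore suffices to prove the contrapositive: if $(I^{[p]}:_S I)\subseteq\mathfrak{n}^{[p]}$, then $\bigl((I+gS)^{[p]}:_S(I+gS)\bigr)\subseteq\mathfrak{n}^{[p]}$.

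The Gorenstein hypothesis enters through the fact that these colon ideals are \emph{principal modulo the relevant Frobenius power}. Applying the exact Peskine--Szpiro Frobenius functor to the self-dual minimal free resolution of the Gorenstein ring $S/I$ shows that $S/I^{[p]}$ is again Gorenstein; and since $S/I$ is a Cohen--Macaulay quotient of $S/I^{[p]}$ of the same dimension, we have $(I^{[p]}:_S I)/I^{[p]}=\operatorname{Hom}_{S/I^{[p]}}(S/I,\,S/I^{[p]})\cong\omega_{S/I}\cong S/I$, which is cyclic, so $(I^{[p]}:_S I)=I^{[p]}+uS$ for a single element $u$. Likewise, because $f$ is a nonzerodivisor on the Gorenstein ring $R$, the ring $R/\langle f\rangle=S/(I+gS)$ is Gorenstein, so $\bigl((I+gS)^{[p]}:_S(I+gS)\bigr)=(I+gS)^{[p]}+vS$ for a single element $v$.

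The heart of the argument is the identity
\[
\bigl((I+gS)^{[p]}:_S(I+gS)\bigr)=(I+gS)^{[p]}+g^{p-1}uS,
\]
equivalently $v\equiv g^{p-1}u$ modulo $(I+gS)^{[p]}$. The containment $\supseteq$ is immediate, since $(I+gS)^{[p]}=I^{[p]}+g^pS$, while $g^{p-1}u\cdot I\subseteq g^{p-1}I^{[p]}\subseteq I^{[p]}$ and $g^{p-1}u\cdot g=g^pu\in g^pS$. Granting the reverse containment, we conclude: from the hypothesis $u\in(I^{[p]}:_S I)\subseteq\mathfrak{n}^{[p]}$, and from $I\subseteq\mathfrak{n}$, $g\in\mathfrak{n}$ together with characteristic $p$ we get $(I+gS)^{[p]}=I^{[p]}+g^pS\subseteq\mathfrak{n}^{[p]}$ and $g^{p-1}u\in\mathfrak{n}^{[p]}$; hence $\bigl((I+gS)^{[p]}:_S(I+gS)\bigr)\subseteq\mathfrak{n}^{[p]}$, so $R/\langle f\rangle$ is not $F$-pure. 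This is exactly the contrapositive sought.

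The main obstacle is the reverse inclusion in the displayed identity: one must see that $g^{p-1}u$ is not merely \emph{an} element of the colon ideal but \emph{generates} it modulo $(I+gS)^{[p]}$. Since that quotient is a cyclic module over the local ring $S/(I+gS)$, by Nakayama it is enough to check that the class of $g^{p-1}u$ is a minimal generator, which can be done by a length count after passing to an Artinian quotient by a sequence that stays regular on all modules in sight; here one uses that $g$ is a nonzerodivisor modulo $I^{[p]}$, which follows from $f$ being a nonzerodivisor on $R$ and the flatness of the Frobenius map on the regular ring $S$. (Alternatively, as this is precisely the recursive step underlying Fedder's hypersurface criterion, one may simply cite the computation in \cite{fedder.F-purity}.) The remaining ingredients---invariance of $F$-purity under completion, Cohen's structure theorem, and the duality isomorphism $\operatorname{Hom}_{S/I^{[p]}}(S/I,\,S/I^{[p]})\cong\omega_{S/I}$---are standard.
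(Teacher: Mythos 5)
The paper does not actually prove this statement---it is quoted verbatim from \cite[3.4]{fedder.F-purity}---so there is no internal argument to compare against; what you have written is, in outline, Fedder's own proof. Your structure is correct: reduce to a complete local presentation $R=S/I$ with $S$ regular, use the Gorenstein hypothesis and duality to see that $(I^{[p]}:_S I)/I^{[p]}\cong\omega_{S/I}\cong S/I$ is cyclic with generator $u$, establish the identity $\bigl((I+gS)^{[p]}:_S(I+gS)\bigr)=(I+gS)^{[p]}+g^{p-1}uS$, and read off the contrapositive from Fedder's criterion; the easy inclusion $\supseteq$ and the final bookkeeping with $\mathfrak{n}^{[p]}$ are verified correctly. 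Two caveats. First, the reverse inclusion in your displayed identity is the entire mathematical content of the theorem, and you only gesture at it: knowing that the colon module is cyclic over $S/(I+gS)$ and that $g^{p-1}u$ lies in it does not make $g^{p-1}u$ a generator (a cyclic module over a local ring can contain proper cyclic submodules isomorphic to itself), so the minimal-generator check via a length count in an Artinian reduction that you allude to is genuinely needed and must be carried out---or that step must simply be cited as the corresponding lemma in \cite{fedder.F-purity}, in which case the ``proof'' is largely a restatement of the source. Second, the opening claim that $F$-purity of a Noetherian local ring is unaffected by completion is false in general in the direction you need ($R/\langle f\rangle$ $F$-pure implies its completion is $F$-pure requires excellence or $F$-finiteness), and Fedder's criterion itself needs the regular ring $S$ to be $F$-finite, e.g., to have perfect residue field as you assume; both conditions hold in this paper's setting of standard graded rings over an algebraically closed field, but they do not follow from the bare hypotheses of the statement and should be acknowledged.
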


{}
Although the results in this paper do not rely on it, we record here the following theorem of Beauville, which we use to  provide alternative proofs of several steps  throughout the paper:

 \begin{theorem}  \label{beauville}   \cite{beauville.sur_les_hypersurfaces} Let $X\subset \mathbb P^n$ be a smooth hypersurface over an algebraically closed field $k$ of prime characteristic $p > 0$.  If all smooth hyperplane sections of $X$ are isomorphic to one another, then (in suitable linear coordinates) $X$ is defined by an equation of the 
 form $\sum_{i=0}^n x_i^{p^e+1} $ for some non-negative integer $e$, where $p$ is the characteristic of $k$.
 \end{theorem}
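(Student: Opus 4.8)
The plan is to exploit the fact that the hypothesis makes the family of smooth hyperplane sections of $X$ \emph{isotrivial}, and to extract from this two kinds of information: a cohomological rigidity, and a constraint on the dual variety $X^{\vee}\subset(\mathbb{P}^n)^{\vee}$ that can only hold in characteristic $p$. First one disposes of small cases: a hyperplane satisfies the conclusion vacuously, a smooth quadric is $\sum x_i^2$ (the case $e=0$, at least when $\operatorname{char}k\neq 2$), and if $\dim X\le 1$ the hyperplane sections are points, automatically all isomorphic, so that range is tacitly excluded. So assume $d:=\deg X\ge 3$ and $\dim X\ge 2$, fix a smooth hyperplane section $Y=X\cap H$---a smooth hypersurface of dimension $m:=n-2\ge 1$ and degree $d$ in $\mathbb{P}^{n-1}$---and let $U=(\mathbb{P}^n)^{\vee}\setminus X^{\vee}$, over which the universal hyperplane section $\pi\colon\mathcal{Y}\to U$ is smooth and proper with all fibers $\cong Y$. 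Two reformulations are useful. Isotriviality of $\pi$ forces its Kodaira--Spencer map to vanish; chasing the conormal sequence $0\to T_Y\to T_X|_Y\to N_{Y/X}\cong\mathcal{O}_Y(1)\to 0$ and using that $H^0(H,\mathcal{O}_H(1))$ surjects onto $H^0(Y,\mathcal{O}_Y(1))$, this is equivalent to the vanishing of the connecting homomorphism $H^0(Y,\mathcal{O}_Y(1))\to H^1(Y,T_Y)$ for every smooth linear section $Y$. Independently, the sheaf $\underline{\mathrm{Isom}}_U(Y\times U,\mathcal{Y})$ is a torsor under $G:=\underline{\mathrm{Aut}}_{Y/k}$, which, because $d\ge 3$, has finitely many connected components ($G$ is finite for $\dim Y\ge 2$ by the classical finiteness of automorphism groups of smooth hypersurfaces, and for $\dim Y=1$ its identity component acts trivially on $\ell$-adic cohomology); since a connected algebraic group acts trivially on $\ell$-adic cohomology, the monodromy $\pi_1(U)\to\mathrm{GL}(H^m(Y,\mathbb{Q}_\ell))$ has finite image for $\ell\neq p$.

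Next I would restrict $\pi$ to a general line (pencil) in $(\mathbb{P}^n)^{\vee}$ and apply Picard--Lefschetz theory. If $X$ were \emph{reflexive}---equivalently, if its Gauss map $\gamma_X\colon X\to(\mathbb{P}^n)^{\vee}$, $x\mapsto[\partial_0F(x):\cdots:\partial_nF(x)]$, were separable---then a general such pencil is a Lefschetz pencil, its monodromy is generated by the transvections in the vanishing cycles, and it acts irreducibly on the vanishing cohomology $H^m_{\mathrm{van}}(Y,\mathbb{Q}_\ell)$, which is nonzero since $d\ge 3$. When $m$ is odd those transvections have infinite order, contradicting the finiteness just established; when $m$ is even they are reflections of order two, and a separate argument is needed to see that a finite irreducible reflection group can occur on $H^m_{\mathrm{van}}$ of a smooth degree-$d$ hypersurface of even dimension only in the single borderline case of a cubic threefold, which is then treated by hand. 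The conclusion is that $\gamma_X$ is \emph{inseparable}---impossible in characteristic zero, and precisely the feature that singles out the Fermat--Hermitian hypersurfaces, for which $\gamma_X=[x_0^{q}:\cdots:x_n^{q}]$ with $q=p^e$ is a power of Frobenius.

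The last and hardest step is to upgrade ``$\gamma_X$ inseparable and $X$ smooth'' to the exact equation $F\sim\sum_{i=0}^n x_i^{q+1}$. Inseparability of $\gamma_X$ forces the Hessian of $F$ to vanish on $X$, hence---its entries having degree $d-2<d$---identically; so each $\partial_iF$ has vanishing gradient and therefore lies in $k[x_0^p,\dots,x_n^p]$. Writing $\partial_iF=G_i^{p}$, smoothness of $X$ makes the $G_i$ have no common zero, and Euler's identity gives $F=d^{-1}\sum_i x_iG_i^{p}$. This much is not yet enough---for generic $G_i$ of any degree the hypersurface $\sum_i x_iG_i^{p}$ is smooth with pairwise non-isomorphic hyperplane sections---so here the full hypothesis that the sections are genuinely \emph{isomorphic}, and not merely that the monodromy is finite, must re-enter, most naturally through the vanishing of the connecting map $H^0(Y,\mathcal{O}_Y(1))\to H^1(Y,T_Y)$ for \emph{all} linear sections $Y$: in characteristic zero this is impossible for $d\ge 3$ (the variation of Hodge structure of a section is then nontrivial), while in characteristic $p$ it can fail only because Frobenius destabilizes the tangent bundle $T_X$ in a controlled way, and tracing this through forces each $\partial_iF$ to be the $p^e$-th power of a linear form, for a common $e$; after the linear change of coordinates making those linear forms the $x_i$, one obtains $F=(q+1)^{-1}\sum_i x_i^{q+1}$ with $q=p^e$. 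I expect the two genuine obstacles to be the even-dimensional case of the Picard--Lefschetz step and this concluding rigidity analysis; for complete details I would follow Beauville's argument in \cite{beauville.sur_les_hypersurfaces}.
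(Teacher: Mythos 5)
First, a point of context: the paper contains no proof of this statement. It is Beauville's theorem, quoted from \cite{beauville.sur_les_hypersurfaces} and explicitly flagged in the text as a result the paper does \emph{not} rely on (it is invoked only to give alternative proofs of steps established elsewhere). So there is no in-paper argument to compare yours against; your proposal has to stand on its own as a reconstruction of Beauville's proof, and by your own closing sentence it does not intend to --- you defer both of what you call the ``genuine obstacles'' (the even-dimensional Picard--Lefschetz case and the concluding rigidity analysis) back to Beauville. Those two steps are where essentially all of the content lies, so what you have written is an annotated table of contents for the known proof rather than a proof.

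Beyond the acknowledged omissions, one intermediate inference is wrong as written. You claim that inseparability of the Gauss map $\gamma_X=[\partial_0F:\cdots:\partial_nF]$ forces the Hessian of $F$ to vanish on $X$ and hence (the entries having degree $d-2<d$) identically, so that each $\partial_iF$ lies in $k[x_0^p,\dots,x_n^p]$. But inseparability of a generically finite map only says that its differential drops rank everywhere; for $\gamma_X$ this is the degeneracy of the second fundamental form along $X$, i.e.\ the vanishing on $X$ of a Hessian \emph{determinant} (a minor of size $n-1$, of degree roughly $(n-1)(d-2)$), not the vanishing of the individual second partials $\partial_j\partial_iF$. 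The vanishing of a determinant on $X$ neither has degree $<d$ in general nor implies the vanishing of the matrix, so the chain ``$\gamma_X$ inseparable $\Rightarrow$ Hessian matrix $\equiv 0$ $\Rightarrow$ $\partial_iF=G_i^p$'' has a genuine gap. (And, as your own remark about $\sum_i x_iG_i^p$ shows --- e.g.\ $\sum_i x_i^7$ in characteristic $3$ is smooth, has identically zero Hessian matrix and purely inseparable Gauss map, yet is not of Hermitian type --- even the corrected conclusion of that step is far from sufficient, so the final rigidity argument cannot be waved at via ``Frobenius destabilizes $T_X$'' without detail.) If your intention is to use the theorem as the paper does, as a black-box citation, that is entirely appropriate; as a standalone proof, the proposal is incomplete at its two decisive steps and incorrect at the Hessian step.
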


\section{$F$-purity of Cubics and some Linear Algebra}\label{linalg}

Fix a field $k$. 
A cubic form in four variables  over $k$ is an element of  the twenty dimensional space ${\text{Sym}}^3\left((k^4)^*\right)$. The corresponding set of cubic surfaces in $\mathbb P^3$ (including all the degenerate, singular, and even non-reduced ones)  is therefore  parametrized by the nineteen dimensional projective space $\mathbb P({\text{Sym}}^3\left((k^4)^*)\right)$.

{}
Explicitly, we can write a cubic form 
(uniquely) 
in the form 
$$
x_1^2 L_1 + x_2^2L_2 + x_3^2L_3 + x_4^2L_4 + b_1x_2x_3x_4 + b_2x_1x_3x_4 + b_3 x_1x_2x_4 + b_4x_1x_2x_3
 $$
where the 
$L_i$ are linear forms and the $b_i$ are scalars. Using matrix notation, we can write
$$
\begin{bmatrix}
L_1 \\
L_2\\
L_3\\
L_4
\end{bmatrix} 
= 
\begin{bmatrix}
a_{11} & a_{12} & a_{13} & a_{14} \\
a_{21} & a_{22} & a_{23}  & a_{24} \\
a_{31} & a_{32} & a_{33} & a_{34} \\
a_{41} & a_{42} & a_{43} &  a_{44} 
\end{bmatrix} 
\begin{bmatrix}
x_1 \\ x_2 \\ x_3 \\ x_4 \\
\end{bmatrix}, $$
and thus see that the sixteen scalars $a_{ij}$, together with the four scalars $b_k$,  determine a unique cubic form in $x_1, \dots, x_4$.  Compactly, we write  the  cubic as \begin{equation}\label{Cubic4}
\begin{bmatrix}   x_1^2 &  x_2^2  &  x_3^2 & x_4^2\end{bmatrix}
 A \begin{bmatrix}  x_1 \\ x_2 \\ x_3 \\x_4\end{bmatrix} + b_1x_2x_3x_4 +  b_2x_1x_3x_4 + b_3 x_1x_2x_4 + b_4x_1x_2x_3
 \end{equation}
 where $A$ is the  $4\times 4$ matrix $[a_{ij}]$.

 Similarly (or as a special case in which scalars associated with $x_4$ are zero), a cubic form  in three variables can be written as 
\begin{equation}\label{Cubic3}
\begin{bmatrix}   x_1^2 &  x_2^2  &  x_3^2  \end{bmatrix}
 A \begin{bmatrix}  x_1 \\ x_2 \\ x_3 \end{bmatrix} + bx_1x_2x_3.
 \end{equation}
where $A$ is a $3\times 3$ matrix and $b$ is a scalar.

{}
Thinking of these  $a_{ij}$ and $b_1, b_2, b_3, b_4$ as the homogeneous coordinates for the $\mathbb P^{19}$ parametrizing all cubic surfaces, we have the following description of the Frobenius split ones in characteristic two: 

\begin{proposition}\label{parameter} Fix a ground field $k$ of characteristic two. 
The set of \textbf{Frobenius split cubic surfaces}  is the non-empty Zariski open set  of
  the $\mathbb P^{19}$ of all cubic surfaces that is 
  complementary to the   codimension four linear subspace 
where $b_1=b_2=b_3=b_4 = 0$ in the expression of the cubic 
as
\begin{equation}
x_1^2L_1+ x_2^2L_2 + x_3^2L_3 + x_4^2L_4 + b_1x_2x_3x_4 +  b_2x_1x_3x_4 + b_3 x_1x_2x_4 + b_4x_1x_2x_3.
 \end{equation}
 Put differently, a cubic surface is Frobenius split if and only if  some $b_i$ is non-zero---that is, if and only if its equation has a non-zero square-free monomial. 
\end{proposition}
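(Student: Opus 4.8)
The plan is to read the statement off from Fedder's criterion (Theorem~\ref{fedder}), which in characteristic two becomes especially transparent. Since $p=2$ (and $k$ is perfect --- if not, pass to $\overline{k}$, which by the Remark following Theorem~\ref{fedder} does not affect $F$-purity), Fedder's criterion says that for a cubic form $f \in R = k[x_1,x_2,x_3,x_4]$ the ring $R/\langle f\rangle$ is $F$-pure if and only if $f^{p-1} = f \notin \idealm^{[2]} = \langle x_1^2, x_2^2, x_3^2, x_4^2\rangle$. Recalling that Frobenius splitting of the surface is the same as $F$-purity of this homogeneous coordinate ring, the whole statement therefore reduces to the elementary assertion that a cubic form $f$ lies in $\idealm^{[2]}$ if and only if $b_1=b_2=b_3=b_4=0$ in the representation~\eqref{Cubic4}, i.e. if and only if $f$ has no square-free monomial.

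To prove this assertion I would first record that $\idealm^{[2]}$ is a monomial ideal, so a polynomial lies in it precisely when each of its monomials does, and a monomial lies in $\langle x_1^2,\dots,x_4^2\rangle$ precisely when it is divisible by $x_i^2$ for some $i$. Then I would sort the degree-three monomials in four variables into two kinds: those divisible by the square of some variable, namely $x_i^3$ and the $x_i^2 x_j$ with $i\ne j$; and those that are not, namely the products of three distinct variables --- of which there are exactly four, $x_2x_3x_4,\ x_1x_3x_4,\ x_1x_2x_4,\ x_1x_2x_3$, occurring in $f$ with coefficients $b_1,b_2,b_3,b_4$ by the uniqueness of the normal form~\eqref{Cubic4}. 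Combining the two points: $f \in \idealm^{[2]}$ if and only if $f$ has no square-free monomial, i.e. if and only if $b_1=b_2=b_3=b_4=0$; so $R/\langle f\rangle$ is $F$-pure exactly when some $b_i\ne 0$.

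Finally, to match the geometric description: since the twenty scalars $a_{ij}$ and $b_k$ are homogeneous coordinates on the $\mathbb P^{19}$ of all cubic surfaces, the vanishing of the four coordinates $b_1,b_2,b_3,b_4$ cuts out a linear subspace of codimension four, and its complement --- the Frobenius split cubics --- is a Zariski open set, which is non-empty because, for instance, $x_1x_2x_3$ has $b_4=1\ne 0$.

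I do not expect a real obstacle here: the proposition is essentially Fedder's criterion unwound in the smallest nontrivial case. The only step that calls for any care is the monomial bookkeeping of the second paragraph --- checking that the square-free degree-three monomials in four variables are exactly the four attached to $b_1,\dots,b_4$ --- which is exactly where the uniqueness of the representation~\eqref{Cubic4} is used.
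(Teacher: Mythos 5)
Your proposal is correct and follows essentially the same route as the paper: apply Fedder's criterion with $p=2$ so that $F$-purity becomes $f \notin \idealm^{[2]}$, and then observe from the normal form \eqref{Cubic4} that this holds exactly when some $b_i$ is nonzero. The paper's proof is just a terser version of yours; your extra care with the monomial bookkeeping and the non-emptiness of the open set is fine but not a different argument.
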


\begin{proof}
 In characteristic two, Fedder's criterion, Theorem \ref{fedder}, tells us that $k[x_1, \dots, x_n]/\langle f\rangle$ is $F$-pure if and only if 
$f \notin \idealm^{[2]}$, where $\idealm=\langle  x_1, \dots, x_n\rangle.$ 
Examining Equation \eqref{Cubic4}, we see that this is the same as saying that some $b_i$ is non-zero. 
\end{proof}

{}
\begin{remark}\label{Hart}
As a special case, a cubic in three variables is Frobenius split if and only if, writing it in the form
$$x^2L_1+ y^2L_2 +  z^2L_3 + bxyz,
$$
the  ``square-free''  term $xyz$ appears with non-zero coefficient.  This recovers the fact that a  \emph{smooth} elliptic curve of characteristic $2$   is ordinary if and only if 
the cubic polynomial $f$ defining it as a subvariety of $\mathbb P^2$  has a nonvanishing square free monomial term; see Example \ref{fedder-elliptic: R}. 
\end{remark}

   \subsection{Singular Locus of Non-$F$-pure Cubics}

   For future reference, we record here a simple description of the singular locus for a cubic surface that is not Frobenius split  in terms of the matrix $A$ representing it:
   
 \begin{proposition} \label{singloc}
Given a  non-$F$-pure cubic $h$ over a field of characteristic 2  in four variables, 
$$h=  \begin{bmatrix}   x_1^2 &  x_2^2  &  x_3^2 & x_4^2\end{bmatrix}
 A \begin{bmatrix}  x_1 \\ x_2 \\ x_3 \\ x_4\end{bmatrix},$$
the codimension of the singular locus of the cubic surface defined by $h$ is equal to the rank of $A$.
 \end{proposition}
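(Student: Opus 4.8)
The plan is to write out the Jacobian ideal of $h$ in characteristic two, recognize its zero locus as a linear subspace of $\mathbb{P}^3$, and read the codimension off of $A$. We may freely assume $k$ is perfect, which holds under our standing hypothesis that $k$ is algebraically closed.

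First I would compute the partial derivatives. Writing $h=\sum_i x_i^2 L_i$ with $L_i=\sum_j a_{ij}x_j$, the Leibniz rule in characteristic two yields $\partial h/\partial x_k=\sum_i a_{ik}x_i^2$ (the contributions $2x_iL_i$ all vanish); equivalently the gradient of $h$ is $A^{\mathsf T}(x_1^2,x_2^2,x_3^2,x_4^2)^{\mathsf T}$. Since $\deg h=3\equiv 1$ modulo $2$, the Euler relation gives $h=\sum_k x_k\,\partial h/\partial x_k$, so $h$ lies in the Jacobian ideal $J:=\big(\partial h/\partial x_1,\dots,\partial h/\partial x_4\big)$; consequently the singular locus of the cubic surface $V(h)\subset\mathbb{P}^3$ is cut out exactly by $J$.

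Next I would rewrite each generator of $J$ as a perfect square. Over the perfect field $k$ every $a_{ik}$ has a unique square root, and in characteristic two $\big(\sum_i\sqrt{a_{ik}}\,x_i\big)^{2}=\sum_i a_{ik}x_i^{2}$, so $\partial h/\partial x_k=M_k^{\,2}$ for the linear form $M_k:=\sum_i\sqrt{a_{ik}}\,x_i$. Hence $J=(M_1^2,\dots,M_4^2)$ and, set-theoretically, the singular locus is the linear subspace $V(M_1,\dots,M_4)$ of $\mathbb{P}^3$, whose codimension is $\dim_k\operatorname{span}_k(M_1,\dots,M_4)$, that is, the rank of the $4\times 4$ matrix $[\sqrt{a_{ik}}]$. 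Finally, applying the field automorphism $c\mapsto\sqrt c$ entrywise to a matrix preserves its rank — it commutes with the polynomial $\det$ and so sends nonzero minors to nonzero minors — whence $\operatorname{rank}[\sqrt{a_{ik}}]=\operatorname{rank}A$, and the codimension of the singular locus equals $\operatorname{rank}A$.

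There is no real obstacle here; the points to handle with care are the characteristic-two cancellations (both in the derivative and in writing each partial as a square), the use of Euler's relation together with $3\equiv 1$ to identify the singular locus of the projective surface with $V(J)$, and the observation that an entrywise square root preserves matrix rank. Two bookkeeping remarks complete the picture. When $\operatorname{rank}A=4$ the $M_k$ are linearly independent, so $J=(M_1^2,\dots,M_4^2)$ is generated by a regular sequence and has height four, $V(J)=\emptyset$, and $V(h)$ is smooth — consistent with interpreting a codimension-four locus in $\mathbb{P}^3$ as empty. And if one wishes to avoid extracting square roots, one may instead note that the $\partial h/\partial x_k$ are linear forms in the variables $y_i:=x_i^2$ and that the Frobenius morphism $\mathbb{P}^3\to\mathbb{P}^3$, $[x_i]\mapsto[x_i^2]$, is a homeomorphism, hence preserves the codimension of the linear locus $\ker A^{\mathsf T}$, which is $\operatorname{rank}A$.
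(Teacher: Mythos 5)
Your proof is correct and follows essentially the same route as the paper's: compute the partials to get the system $A^{\mathsf T}(x_1^2,\dots,x_4^2)^{\mathsf T}=0$, recognize it as the square of a linear system with matrix $(A^{[1/2]})^{\mathsf T}$, and note that the entrywise square root preserves rank. The extra details you supply (the Euler relation showing $h$ lies in the Jacobian ideal, and the regular-sequence remark in the rank-four case) are correct refinements of steps the paper leaves implicit.
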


 \begin{proof} 
 The singular locus is defined by the vanishing of 
 $$
 \frac{\partial h}{\partial x_1}, \,\,\, \frac{\partial h}{\partial x_2}, \,\,\, \frac{\partial h}{\partial x_3}, \,\,\, \frac{\partial h}{\partial x_4},
 $$
 i.e., the system of equations
 $$
 A^{tr} \begin{bmatrix}  x_1^2 \\ x_2^2 \\ x_3^2 \\ x_4^2\end{bmatrix} = 0.
 $$
 This is the ``double linear space'' defined by 
  $$
{A^{[1/2]}}^{tr} \begin{bmatrix}  x_1 \\ x_2\\ x_3\\ x_4\end{bmatrix} = 0,
 $$
 so that its codimension is precisely the rank of $ {A^{[1/2]}}^{tr}$, which is the same as the rank of $A$. Here the notation $A^{[1/2]}$ denotes the matrix obtained from $A$  by taking the unique square root of each entry, and $ {A^{[1/2]}}^{tr}$ is its transpose.
 \end{proof}

{}
\subsection{Changing Coordinates}  
\label{CoordChange} We record some observations about the behavior  under coordinate changes. 

{}
Let  $h$ be a non-$F$-pure cubic defining a cubic surface of  characteristic $2$. Using  Proposition \ref{parameter}, we can write $h$   uniquely as 
\begin{equation}
h =  \begin{bmatrix}   x_1^2 &  x_2^2  &  x_3^2 & x_4^2\end{bmatrix}
 A \begin{bmatrix}  x_1 \\ x_2 \\ x_3 \\ x_4\end{bmatrix}
\end{equation}
where $A$ is some $4\times 4$ matrix of scalars. Let $g = [\lambda_{ij}] \in GL_4(k)$ be an invertible $4\times 4$ matrix  which acts on the coordinates $\{x_1, \dots, x_4\}$ in the obvious way, i.e., via
\[ 
g \cdot  \begin{bmatrix}  x_1 \\ x_2 \\ x_3 \\ x_4 
\end{bmatrix} = \begin{bmatrix}  \sum_{j=1}^4 \, \lambda_{1j} x_j \\ 
 \sum_{j=1}^4  \,  \lambda_{2j} x_j \\
 \sum_{j=1}^4  \,  \lambda_{3j} x_{j} \\   \sum_{j=1}^4 \,  \lambda_{4j} x_{j}
\end{bmatrix}.
\]

{}

Given a matrix $B$ of any size over a field of characteristic $p>0$, 
   we denote by $B^{[p]}$ the matrix obtained by raising each entry of $B$ to the $p$-th power. If $g$ is a change of coordinates represented by an invertible $4\times 4$ matrix,  then it is easy to check that 
 $$g \cdot  \begin{bmatrix}   x_1^p \\  x_2^p  \\  x_3^p \\ x_4^p\end{bmatrix} \, = \, \large[g 
   \begin{bmatrix}  x_1 \\ x_2 \\ x_3 \\ x_4\end{bmatrix}\large ]^{[p]} \, = \,  g^{[p]} \begin{bmatrix}  x_1^p \\  x_2^p  \\  x_3^p \\ x_4^p\end{bmatrix}.$$
  Here the notation $\cdot$ indicates the ring automorphism induced by the linear change of coordinates, and all other adjacent symbols are usual matrix product.

  So our change of  coordinates formula  for a non-$F$-pure cubic is 
  $$
g \cdot  h \, =  \, g \cdot \begin{bmatrix}  x_1^2 &  x_2^2  &  x_3^2 & x_4^2\end{bmatrix}
  A   \begin{bmatrix}  x_1 \\ x_2 \\ x_3\\ x_4 \end{bmatrix} = 
 \begin{bmatrix}  x_1^2 &  x_2^2  &  x_3^2 & x_4^2\end{bmatrix}
\left[g^{[2]}\right]^{tr}  A g  \begin{bmatrix}  x_1 \\ x_2 \\ x_3\\ x_4 \end{bmatrix},
 $$
 where $B^{tr}$ denotes the transpose of the matrix $B$.
 We can write this in the compact form 
\begin{equation}\label{compact1}
g \cdot  h  \, = \, g \cdot \left[(\vec{x}^{[2]})^{tr} A \,  \vec{x}\right]  = 
 (\vec{x}^{[2]})^{tr} 
 \left[
  \left[g^{[2]}\right]^{tr}  A g 
  \right]
  \, \vec{x},
\end{equation}
 or simply state that when a coordinate change $g$ acts on a non-$F$-pure cubic $h$ whose matrix is $A$, the matrix of $g\cdot h$ is
\begin{equation}\label{compact}
  \left[g^{[2]}\right]^{tr}  A g.
\end{equation}

   {}
 
 \begin{remark}\label{rowops}
It is worth recording how each elementary coordinate operation  affects  the matrix $A$ representing a non-$F$-pure cubic $h$. By elementary operation, we mean one of the following:
\begin{itemize}
\item Swap two variables:  $x_i\mapsto x_j$ and $x_j\mapsto x_i$,  fixing the others.
\item Multiply coordinate $x_i$  by a non-zero scalar  $\lambda$:  $x_i\mapsto \lambda x_i$,  fixing the others.
\item Replace $x_i$ by $x_i + \lambda x_j$ for some $j\neq i$, fixing the others. 
\end{itemize}
Each of these corresponds to multiplying the column vector  $\begin{bmatrix}  x_1 \\ x_2 \\ x_3\\ x_4 \end{bmatrix}$ on the left by the corresponding elementary matrix $E$. 
The effect on the $A$ matrix representing  $f$ is to multiply by the transpose of $E^{[2]}$ on the left, and by $E$ on the right.  This amounts to the following, in each respective case: 
\begin{itemize}
\item Swap columns $C_i$ and $C_j$  {\bf and} rows $R_i$ and $R_j$,   fixing the others.
\item Multiply row $R_i$ by $\lambda^2 $  {\bf and}  column $C_i$ by $\lambda$. 
\item Replace column $C_j$ by column $C_j+\lambda C_i$   {\bf and} replace row $R_j$ by row $R_j+\lambda^2 R_i$.

\end{itemize}
 \end{remark}

 {}
 
  \begin{remark}
 We can think of Equation \eqref{compact} as describing a right action $GL_4(k)$  on $k^{4\times 4}$ such that  
 $$
 g \in GL_4(k) \,\,{\text{acts on }} A\in k^{4\times 4} {\text{ by }} \left[g^{[p]}\right]^{tr}A g.
 $$
     Lang studies these representations in \cite{lang.algebraic_groups}.
          \end{remark}
          
     {}
   
     \section{Classification of Non Frobenius split cubic surfaces.}

In this section, we prove that up to projective change of coordinates, there are only finitely many cubic surfaces of characteristic two  that are not Frobenius split, and list them out explicitly. 
We state the classification separately for degenerate and non-degenerate cubic surfaces, before proving all the statements.

By a {\it non-degenerate} form in $n$ variables, we mean a form that can not be written in fewer than $n$ variables after linear change of coordinates. Geometrically, this means the corresponding hypersurface is not the cone over a hypersurface in  a smaller dimensional projective space. 

\begin{theorem}\label{main} Let $h$ be a non-degenerate cubic form  in four variables over an algebraically closed field  of characteristic two. Then the hypersurface in $\mathbb P^3$ defined by $h$ fails to be Frobenius split if and only if, up to linear change of coordinates, $h$ is exactly {\it one} of the following:
  \begin{enumerate}[label=\textup{(\arabic*)}]
\item $x_1^3+x_2^3+x_3^3+x_4^3$
\item $ x_1^2x_4 + x_2^3 + x_1 x_3^2$
\item $x_1^2x_4 + x_2^2x_3+ x_1 x_3^2$
\item 
    $ x_1^2x_4 + x_2^3 + x_3^3$
\item $x_1^2x_3  + x_2^2 x_4$
\end{enumerate}
The first of these is the unique smooth non-Frobenius split cubic surface. The second, third, and fourth  are normal, with an isolated singularity at $[0:0:0:1]$. The final one is the non-normal cubic surface whose singular locus is the line $x_1=x_2=0$. 
\end{theorem}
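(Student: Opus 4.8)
The plan is to use Fedder's criterion (Theorem \ref{fedder}) together with the matrix normal form developed in Section \ref{linalg}, reducing the classification to an orbit problem for the action $A \mapsto [g^{[2]}]^{tr} A g$ of $GL_4(k)$ on $k^{4\times 4}$. First I would observe that, by Proposition \ref{parameter}, a non-Frobenius split cubic has all four square-free coefficients $b_i$ zero, so it is represented uniquely by a matrix $A$ as $h = (\vec{x}^{[2]})^{tr} A\, \vec{x}$; the condition that $h$ be non-degenerate rules out the case where $A$ can be made to have a zero row and column simultaneously (cf. Remark \ref{degen}), and Proposition \ref{singloc} translates the rank of $A$ into the codimension of the singular locus. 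So the strategy is to stratify by $\operatorname{rank}(A) \in \{4,3,2,1\}$ (rank $\le 1$ and the degenerate rank-$2$ cases will be handled separately, or shown to be cones), and within each stratum classify the $A$'s up to the twisted conjugation action, using the elementary operations recorded in Remark \ref{rowops} — simultaneous $R_i \leftrightarrow R_j$ with $C_i \leftrightarrow C_j$, scaling $R_i$ by $\lambda^2$ and $C_i$ by $\lambda$, and the transvection $C_j \mapsto C_j + \lambda C_i$, $R_j \mapsto R_j + \lambda^2 R_i$.

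Next I would carry out the rank-by-rank reduction. For $\operatorname{rank}(A) = 4$ the cubic is smooth; here I expect the cleanest route is to invoke Beauville's theorem (Theorem \ref{beauville}) or the supersingular elliptic curve input mentioned in the introduction — a smooth non-Frobenius split cubic surface, being extremal, should force all smooth hyperplane sections to be the unique supersingular plane cubic, hence by Beauville the Fermat form $x_1^3 + x_2^3 + x_3^3 + x_4^3$; alternatively one shows directly that the twisted-conjugation orbit of an invertible $A$ with $h \notin \idealm^{[2]}$ meeting the smoothness (Jacobian) condition is a single orbit, represented by the identity matrix, whose cubic is the Fermat cubic. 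For $\operatorname{rank}(A) = 3$ I would put $A$ in a block form with an invertible $3\times 3$ corner and a rank-one "tail", then use the transvections to clear the tail as much as possible; the residual freedom should collapse the possibilities to the three normal cubics (2), (3), (4), distinguished by the isomorphism type of the $3\times 3$ block (which governs the tangent cone / type of the isolated singularity at $[0:0:0:1]$) — cases (2) and (4) corresponding to the two essentially different rank-3 forms and (3) to the remaining one. For $\operatorname{rank}(A) = 2$, after reduction either the form becomes degenerate (a cone, excluded) or it lands on (5) $x_1^2 x_3 + x_2^2 x_4$, whose matrix has the two nonzero entries in positions making it rank 2 and non-degenerate; here Proposition \ref{singloc} gives singular locus of codimension $2$, i.e. the line $x_1 = x_2 = 0$, matching the stated non-normal case. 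Rank $\le 1$ always yields a cone over a lower-dimensional hypersurface, hence is degenerate and excluded.

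The verification that each of (1)--(5) is itself non-Frobenius split is the easy direction: by Fedder one just checks $h \notin \idealm^{[2]}$, i.e. that $h$ has no square-free monomial, which is visible by inspection. The smoothness/singularity claims at the end follow from computing the Jacobian ideal (equivalently from Proposition \ref{singloc} in the rank-$4$, rank-$3$, rank-$2$ cases respectively): rank $4$ gives empty singular locus, rank $3$ gives a single point which one checks is $[0:0:0:1]$ and that the local ring there is normal, and rank $2$ gives the line $x_1 = x_2 = 0$ with non-normal total space.

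I expect the main obstacle to be the rank-$3$ stratum: showing that the transvection moves do not produce more than the three listed normal forms, and in particular separating (2), (3), (4) as genuinely inequivalent. This is where the twisted nature of the action (the $g^{[2]}$ rather than $g$ on the left) matters, since it breaks the naive symmetry between rows and columns, and one must track carefully which scalars are squares — over an algebraically closed field of characteristic two every element is a unique square, which is exactly what makes the normalization $A \mapsto A^{[1/2]}$ in Proposition \ref{singloc} legitimate and what one leans on to finish the case analysis. A secondary subtlety is ensuring, in the rank-$2$ and rank-$3$ reductions, that one does not accidentally land in the degenerate (cone) locus, which requires checking that no simultaneous row-and-column can be killed; this is a finite check once the normal forms are in hand.
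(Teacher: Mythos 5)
Your overall strategy---representing a non-$F$-pure cubic by the matrix $A$ via Proposition \ref{parameter}, stratifying by the rank of $A$, reducing within each stratum by the twisted row/column operations of Remark \ref{rowops}, discarding the rank $\le 1$ and degenerate rank-$2$ cases as cones, and handling the smooth stratum via supersingular hyperplane sections and Beauville---is essentially the paper's proof. (The paper's primary route in the rank-$4$ case goes through the triangle-free characterization of Section \ref{geometry}, with the Beauville argument relegated to a footnote; that is a cosmetic difference. Likewise, the paper's rank-$3$ reduction uses the classification of binary cubics, Lemma \ref{2variable}, applied to a $2\times 2$ submatrix rather than your ``$3\times3$ corner plus tail'' picture, but the computations are the same.)

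The one genuine gap is your step (2), the pairwise inequivalence of forms (2), (3), (4). You propose to separate them by ``the tangent cone / type of the isolated singularity at $[0:0:0:1]$,'' but the tangent cone does not do the job: setting $x_4=1$, all three local equations have lowest-order part $x_1^2$, so all three singularities have the same double-plane tangent cone. More fundamentally, you implicitly treat the isomorphism class of the $3\times3$ block as an invariant of the surface, which needs justification: a priori a projective equivalence between two of these surfaces need not respect the block decomposition of $A$. The paper closes this (Lemma \ref{distinct}) by observing that any equivalence must preserve the singular locus, hence the ideal $\langle x_1,x_2,x_3\rangle$; comparing the coefficients of $x_4$ in $g\cdot F_i$ and $F_j$ then forces $g$ to fix $x_1$, so $g$ descends to the hyperplane sections $x_1=0$, whose coordinate rings are $k[x_2,x_3,x_4]$ modulo $x_2^3$, $x_2^2x_3$, and $x_2^3+x_3^3$ respectively---pairwise non-isomorphic by comparing nilpotency and reducedness. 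Some argument of this kind is required to complete your proof.
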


All these surfaces are extremal from the point of view of the collection of lines on them, as we will prove in  Theorem \ref{eckardt}.

{}
Before embarking on the proof of Theorem \ref{main}, we state and prove the classification for cubic surfaces that are cones over curves in $\mathbb P^2$. In light of Remark \ref{degen}, the classification follows from:

\begin{theorem}\label{3variable} 
A non-degenerate non-$F$-pure cubic in three variables over an algebraically closed field of characteristic two is projectively equivalent to a plane curve defined by one of the following:
\begin{enumerate}[label=\textup{(\arabic*)}]
\item  $x^3+ y^3+z^3$
\item $x^2z + y^3$
\item $x^2z + xy^2$
\end{enumerate}
The first is the only non-$F$-pure smooth cubic curve: every  supersingular elliptic curve is projectively equivalent to this Fermat curve.
The second is  irreducible cuspidal curve cubic with one  singular point (at $[0:0:1]$ in the given coordinates).  The third is  a union of a smooth conic and a line tangent to it at one point ($[0:0:1]$ in the given coordinates). 
\end{theorem}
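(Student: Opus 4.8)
The plan is to argue throughout with the matrix representation of Proposition~\ref{parameter}. A non-$F$-pure cubic $h$ in three variables $x,y,z$ has no $xyz$ term (Remark~\ref{Hart}), so $h=(\vec{x}^{[2]})^{tr}A\,\vec{x}$ for a unique $3\times 3$ matrix $A$, and by \eqref{compact} a linear change of coordinates replaces $A$ by $(g^{[2]})^{tr}Ag$ with $g\in GL_3(k)$; hence $\operatorname{rank}A$ is a projective invariant, and (exactly as in the proof of Proposition~\ref{singloc}) the singular locus of $\{h=0\}\subset\mathbb P^2$ is the linear subspace $\{(A^{[1/2]})^{tr}\vec{x}=0\}$, of codimension $\operatorname{rank}A$. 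If $\operatorname{rank}A\le 1$, then $A=vw^{tr}$ (possibly $0$), so $h=\bigl(\sum_i\sqrt{v_i}\,x_i\bigr)^{2}\bigl(\sum_i w_i x_i\bigr)$ is the square of a linear form times a linear form, hence after a coordinate change a form in at most two variables, i.e.\ degenerate. So a non-degenerate non-$F$-pure cubic in three variables has $\operatorname{rank}A\in\{2,3\}$, and I would handle the two cases separately.

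If $\operatorname{rank}A=3$, the curve $C=\{h=0\}$ is smooth, hence of genus one; since $C$ has a point and multiplication by $3$ is surjective on the associated elliptic curve (as $3\neq 2$), $C$ has a flex, and choosing one as origin realizes $C$ as an elliptic curve, which is supersingular because $h$ is non-$F$-pure, by Example~\ref{fedder-elliptic: R}. Over an algebraically closed field of characteristic two there is, up to isomorphism, a unique supersingular elliptic curve \cite{Husemoller}. Now $x^3+y^3+z^3$ is smooth (its Jacobian ideal is $\langle x^2,y^2,z^2\rangle$) and non-$F$-pure (no $xyz$ term), so the curve it defines represents that unique supersingular curve. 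It then suffices to invoke the classical fact that two abstractly isomorphic smooth plane cubics are projectively equivalent: an isomorphism $C_1\xrightarrow{\ \sim\ }C_2$ carries $\mathcal O_{C_2}(1)$ to a degree-three line bundle on $C_1$, which differs from $\mathcal O_{C_1}(1)$ by a translation of $C_1$ (again because $[3]$ is surjective), and composing with that translation produces an isomorphism matching the two degree-three embeddings, hence an element of $PGL_3$. Therefore every non-degenerate non-$F$-pure smooth plane cubic is projectively equivalent to $x^3+y^3+z^3$: case~(1).

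If $\operatorname{rank}A=2$, the singular locus is a single reduced point, which I move to $[0:0:1]$. A short computation with the partial derivatives, using crucially that $\partial_{x_i}(x_i^{2})=0$ in characteristic two, shows that $\{h=0\}$ is singular at $[0:0:1]$ exactly when the coefficients of $z^3,xz^2,yz^2$ all vanish, i.e.\ when $L_3=0$; so $h=x^2L_1+y^2L_2$, and non-degeneracy forces $z$ to occur in $L_1$ or $L_2$. Using the coordinate changes fixing $[0:0:1]$ (which are block lower triangular) together with the row and column operations of Remark~\ref{rowops}, I would first normalize $h$ to $x^2z+q(x,y)$ with $q$ a binary cubic and then absorb the $x^3$ and $x^2y$ terms of $q$ via $z\mapsto z+ex+fy$; at this point the tangent cone of the curve at $[0:0:1]$ is the double line $\{x^2=0\}$, so the singular point is always cuspidal (so a non-$F$-pure plane cubic in characteristic two never has an ordinary node). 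If the $y^3$-coefficient of $q$ is nonzero, then $h$ is irreducible, and a further short normalization (rescale the $y^3$-term to coefficient $1$, then kill the $xy^2$-term by $y\mapsto y+x$ followed by a final $z\mapsto z+y$) brings $h$ to $x^2z+y^3$: case~(2). If the $y^3$-coefficient of $q$ is zero, then the $xy^2$-coefficient is nonzero (otherwise $\operatorname{rank}A\le 1$) and $h=x\cdot Q$, where $Q$ is a smooth conic to which the line $\{x=0\}$ is tangent at $[0:0:1]$; since $PGL_3$ acts transitively on such (smooth conic, tangent line) pairs, $h$ is projectively equivalent to $x^2z+xy^2$: case~(3). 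Conversely, cases~(1)--(3) are pairwise projectively inequivalent (distinguished by smoothness and by irreducibility) and each is non-degenerate and non-$F$-pure with the geometry asserted in the statement, which completes the classification.

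The step I expect to be the real obstacle is the case $\operatorname{rank}A=3$. A purely linear-algebraic argument in the spirit of Section~\ref{linalg} gives only that the orbit of the identity matrix under $A\mapsto(g^{[2]})^{tr}Ag$ is dense in $GL_3$: the stabilizer of $I$, namely $\{g\in GL_3:g^{[2]}=(g^{tr})^{-1}\}$, has trivial tangent space at $I$ and is therefore finite, so the orbit has full dimension. Upgrading ``dense'' to ``every invertible $A$'' is exactly where the uniqueness of the supersingular elliptic curve, together with the plane-cubic/elliptic-curve dictionary, enters, and this non-linear-algebraic input appears genuinely unavoidable. The case $\operatorname{rank}A=2$, by contrast, is a finite and mechanical reduction with the operations of Remark~\ref{rowops}.
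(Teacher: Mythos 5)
Your proof is correct and follows essentially the same route as the paper: represent the cubic by its $3\times 3$ matrix $A$, split into cases by $\operatorname{rank}A$, dispose of rank $\le 1$ as degenerate, invoke the uniqueness of the supersingular elliptic curve in characteristic two for the rank-$3$ case, and reduce the rank-$2$ case to the two normal forms by explicit coordinate changes, distinguishing them by irreducibility. The only real differences are cosmetic: you make explicit the step (which the paper leaves implicit) that an abstract isomorphism of smooth plane cubics can be upgraded to a projective equivalence via the degree-three line bundles, and in the rank-$2$ case you first locate the singular point geometrically rather than working purely with row and column operations.
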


{}
Finally, we record for future reference the classification of cubics in two variables; this completes the classification of non-Frobenius split cubic surfaces in characteristic two, as this covers the case  where the surface is the cone over a collection of points in $\mathbb P^1$. 

\begin{lemma}\label{2variable}
Every cubic  form $h$  in two variables over an algebraically closed field of any characteristic can be brought to exactly one of the following three  forms by  linear change of coordinates:
    \begin{enumerate}[label=\textup{(\arabic*)}]
    \item $x^3$
    \item $x^2y$
    \item $xy(x+y)$, or any other cubic form with three distinct roots.
    \end{enumerate}
All  of these define non-$F$-pure quotients $k[x, y]/\langle h \rangle$. 
    \end{lemma}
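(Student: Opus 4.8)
The plan is to prove Lemma \ref{2variable} in two stages: first, the normal-form classification, which is essentially the classification of binary cubic forms up to $GL_2(k)$; second, the $F$-purity assertion, which is an easy application of Fedder's criterion (Theorem \ref{fedder}) in characteristic two, together with Remark \ref{degen} to reduce to the two-variable ring.

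For the normal form, I would argue as follows. A nonzero cubic form $h\in k[x,y]$ factors completely over the algebraically closed field $k$ as a product of three linear forms, $h = \ell_1\ell_2\ell_3$ (up to a nonzero scalar, which can be absorbed by rescaling one coordinate). There are exactly three cases according to how many of the $\ell_i$ are pairwise proportional. If all three are proportional, then after a linear change of coordinates making $\ell_1 = x$ we get $h = x^3$. If exactly two are proportional, then after a change of coordinates we may take $\ell_1 = \ell_2 = x$ and $\ell_3 = y$ (the third is not proportional to $x$, so $\{x,\ell_3\}$ is a basis of the linear forms), giving $h = x^2 y$. If all three are pairwise non-proportional, choose coordinates so that $\ell_1 = x$ and $\ell_2 = y$; then $\ell_3 = \alpha x + \beta y$ with $\alpha,\beta$ both nonzero, and rescaling $x$ and $y$ brings $\ell_3$ to $x+y$, so $h = xy(x+y)$ — and any binary cubic with three distinct roots (in $\mathbb P^1$) falls into this last case. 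Finally I would note that these three forms are pairwise inequivalent because the multiplicity pattern of the roots in $\mathbb P^1$ (i.e.\ the partition $3$, $2+1$, or $1+1+1$) is a projective invariant, so the classification is into exactly these three types, proving the "exactly one" clause.

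For the $F$-purity claim, by Theorem \ref{fedder} in characteristic two the ring $k[x,y]/\langle h\rangle$ is $F$-pure if and only if $h\notin\langle x^2,y^2\rangle$, i.e.\ if and only if the monomial $xy$ appears in $h$ with nonzero coefficient. I would simply check this for each of the three normal forms: $x^3$ and $x^2y$ manifestly lie in $\langle x^2,y^2\rangle$, and $xy(x+y) = x^2y + xy^2$ also lies in $\langle x^2,y^2\rangle$; more transparently, every binary cubic form lies in $\langle x^2, y^2\rangle$ because its monomials are $x^3, x^2y, xy^2, y^3$, each divisible by $x^2$ or $y^2$. Hence no nonzero binary cubic gives an $F$-pure quotient, which is the assertion. (Alternatively this is Proposition \ref{parameter} specialized to $n=2$: a binary cubic has no square-free monomial since a square-free monomial in two variables has degree at most two.) Since the normal-form statement is claimed over a field of arbitrary characteristic, I would phrase the first stage without using $\mathrm{char}\,k = 2$, invoking characteristic two only in the last sentence about non-$F$-purity.

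The argument is elementary throughout; the only place requiring a little care is the claim that the root-multiplicity partition is a complete projective invariant, ensuring the three forms are genuinely distinct and that the trichotomy is exhaustive — but this is standard, since a $GL_2(k)$-change of coordinates permutes the points of $\mathbb P^1$ and therefore preserves the multiset of multiplicities of the roots of the associated binary form. I do not anticipate a real obstacle; the main point is just to organize the three cases cleanly.
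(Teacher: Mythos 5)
Your proposal is correct and follows essentially the same route as the paper: factor $h$ into three linear forms, classify by the multiplicity partition of the roots in $\mathbb P^1$ (using that any three or fewer points of $\mathbb P^1$ are projectively equivalent), and then apply Fedder's criterion. The only substantive difference is in the last step: the paper observes that $h^{p-1}$ has degree $3(p-1) > 2(p-1)$, so every monomial of $h^{p-1}$ lies in $\langle x^p, y^p\rangle$ for \emph{every} prime $p$, whereas your verification is carried out only for $p=2$; since the lemma is stated over a field of any characteristic, you should either include this one-line degree count or say explicitly that you are proving only the characteristic-two case used elsewhere in the paper.
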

 
\begin{proof}  
  A homogeneous form $h$ in two variables factors completely into three linear forms $L_1L_2L_3$.
The three cases amount to whether the cubic has one, two or three distinct roots.  These are represented by the three forms above because any set of three (or fewer)  points  in $\mathbb P^1 $ are projectively equivalent. Finally,  because   $h^{p-1}$ has degree $3p-3$, it
    is in the ideal $\langle x^p, y^p\rangle $, which means $h$ defines a non-$F$-pure hypersurface by Fedder's criterion. 
    \end{proof}

\subsection{The Proofs of Theorems \ref{3variable} and  \ref{main}}

\begin{proof}[Proof of  Theorem \ref{3variable}]  Let $f$ be a cubic in three variables that is not $F$-pure. 
Such an $f$ can be written uniquely as 
 $$
 f = x^2L_1 + y^2L_2+ z^2L_3 = \begin{bmatrix}   x^2&  y^2  &  z^2 \end{bmatrix}
A  \begin{bmatrix}  x \\ y \\ z \end{bmatrix},
$$
 where the $L_i$ are linear forms in three variables, and the
  $A$ is the matrix whose $i$-th row is the coefficients of the linear form $L_i$. 
  
  If $A$ has rank $3$, then the cubic represents a smooth cubic curve, whence it is a supersingular elliptic curve of characteristic two. There is only one such curve up to isomorphism
   \cite[p 260]{Husemoller}, so  after changing coordinates, we may assume that $f$ is the Fermat cubic, which is non-$F$-pure by Proposition  \ref{parameter}.
   
   If $A$ has rank one, then  after  changing the names of the variables if necessary, we 
  can assume that the second and third rows of $A$ are multiples of the first. So our cubic form is
  $$
  x^2L + y^2(\lambda L) + z^2 (\mu L) = \left(x+ \lambda^{1/2} y + \mu^{1/2} z\right)^2  L
  $$
  for some linear form $L$ and scalars $\lambda, \mu$. Changing coordinates, this is $x^2y$ or $x^3$,  depending on whether or not $L$ is a scalar multiple of $x + \lambda^{1/2} y + \mu_3^{1/2} z$. Both of these cases are degenerate (and hence covered by Lemma \ref{2variable}). 
  
{}
Finally, we consider the case where $A$ has rank two. In this case, we can assume without loss of generality that $f$ can be written 
$$
f =  x^2 L_1 + y^2L_2+z^2(aL_1 + bL_2) =  
(x + a^{1/2}z)^2 L_1 + (y + b^{1/2} z)^2 L_2,
$$
where is $L_1$ and $L_2$ are linear forms and $a$ and $b$ are scalars. Changing coordinates so the rank two form is
$x^2L_1 + y^2L_2$, 
 the bottom row of $A$ can be assumed to be zero.

Next, we analyze the effect of  coordinate changes of $f$ in terms of the matrix $A$ using the technique explained in Subsection \ref{CoordChange}.
Note that in the expression $x^2L_1 + y^2L_2$, at least one of the terms $x^2z$ or $y^2z$ must appear with non-zero coefficient, for otherwise the form is in the two variables $x, y$ and is degenerate. 
Changing the names of the variables (swapping $x$ and $y$) if needed, we can assume that the coefficient of
$x^2z$ is not-zero (and hence scaling, we can assume it is  1, if we'd like). 

Making use of Remark \ref{rowops}, we can add multiples of column 3 to column 1 and to column 2 to clear out the coefficients of $x^3$ and $x^2y$ in the matrix $A$; this and the corresponding row operations do not affect the fact that row 3 consists of zeros. So without loss of generality, we can assume that the matrix has the form 
 $$
 A = 
\begin{bmatrix}
0 & 0 & 1 \\
*  & * & * \\
0 & 0 & 0\\
\end{bmatrix}.
$$
Now adding  a multiple of row 1 to row 2 (and the corresponding column operation), we can assume it is 
 $$
 A = 
\begin{bmatrix}
0 & 0 & 1 \\
a  & b & 0\\
0 & 0 & 0\\
\end{bmatrix}.
$$
If  $a=b=0$, then the form is $x^2z$, which is degenerate, so assume at least one is not zero. If both $a$ and $b$ are non-zero, then add a multiple of column 2 to column 1 to make $a=0$; the corresponding row operation changes row 1, but this can be easily corrected by again clearing out row 1 using column 3 (the corresponding row operation does nothing since row 3 is a zero row).  Thus we can assume that exactly one of $a$ or $b$ is non-zero, in which case  we can scale the appropriate variable to  assume it is 1. 
So, up to linear changes of coordinates, every rank two (non-degenerate) cubic in three variables will be represented by one of the following matrices:
$$
\begin{bmatrix}
0 & 0 & 1 \\
0  & 1  & 0\\
0 & 0 & 0\\
\end{bmatrix} \,\,\,\,\,\,\,\,\,\,\,\,\,\,\,\,\,\,
\begin{bmatrix}
0 & 0 & 1 \\
1  & 0 & 0\\
0 & 0 & 0\\
\end{bmatrix}.
$$
These
correspond to the two forms
$$
x^2z+y^3 \,\,\,\,\,\,\,\, {\text{and}} \,\,\,\,\,\,\,\,  x^2z + xy^2
$$
which are distinct, since the former is irreducible while the latter is not.

\end{proof}

   \begin{proof}[Proof of Theorem \ref{main}]
  Let 
   $$f= x_1^2L_1+x_2^2L_2+x_3^2L_2+x_4^2L_4 =  \begin{bmatrix}   x_1^2 &  x_2^2  &  x_3^2 & x_4^2\end{bmatrix}
 A \begin{bmatrix}  x_1 \\ x_2 \\ x_3 \\ x_4\end{bmatrix}$$
 be a non-degenerate  non-$F$-pure cubic in four variables. 
We need to show
 \begin{enumerate}
 \item $f$ can be brought to one of the five normal forms of Theorem \ref{main} by linear change of coordinates
 \item The five polynomials in Theorem \ref{main} define non-isomorphic cubic surfaces.
 \end{enumerate}

 {}
   
 Towards (1), we give separate arguments, depending on the rank of $A$.

We first dispatch with the rank four case---that is, the case where the projective  hypersurface  $\mathcal X$ defined by $f$  is smooth---using our geometric characterization of Frobenius split cubic surfaces in Section \S 5. If $\mathcal X$ is not Frobenius split, then no hyperplane section is Frobenius split, by Theorem \ref{deform}. Invoking 
   our classification of the non-Frobenius split cubic curves (Theorem \ref{3variable} and Lemma \ref{2variable}), we see that no hyperplane section of $\mathcal X$ can be a union of  three lines, unless those lines meet at a point. So $\mathcal X$ contains no triangles, and   is projectively equivalent to the Fermat cubic surface by Corollary \ref{EckSmooth}.{\footnote{  
  Alternatively,  we can instead use Beauville's theorem  to come to the same conclusion: 
  Since every smooth hyperplane section of $\mathcal X$ is a supersingular elliptic curve of characteristic two, all such sections are isomorphic (\cite[p.~ 260]{Husemoller}), so 
  Theorem \ref{beauville} implies $f$ is the Fermat cubic.}}

{}
  Next, assume the rank of $A$ is one. Here the rows of $A$ are multiples of some fixed linear form $L$, say $L_i = \lambda_i L$.  In this case, we can rewrite $f= L (L')^2$ where $L'$ is some other linear form.  There are two cases,
  either $f= x^3$ or $f = x^2y$, up to changing coordinates. Both of these are degenerate.

{}
Now, assume the rank of $A$ is two.  Without loss of generality, we assume  $L_1, L_2$ are linearly independent and write $L_3 = \lambda_1 L_1 + \lambda_2 L_2$ and $L_4 = \mu_1 L_1 + \mu_2 L_2$. Write 
    $$
    f = x_1^2L_1 + x_2^2 L_2 + x_3^2(\lambda_1 L_1 + \lambda_2 L_2) + x_4^2(\mu_1 L_1 + \mu L_2).$$ 
    Reorganizing, we have
    $$
   f =  (x_1+ \lambda_ 1^{1/2}x_3 + \mu_1^{1/2} x_4)^2L_1 + (x_2+  \lambda_2^{1/2} x_3 +  \mu_2^{1/2}x_4)^2 L_2 .
   $$
   which we re-write as 
   $$
   f =   x_1^2L_1 + x_2^2 L_2.
 $$
 for some (new) linear forms $x_1, x_2$. 
  Now, if the linear forms $x_1, x_2, L_1, L_2$  are not linearly independent, then changing coordinates, $f$ can be written as a cubic in three variables, and  the cubic is degenerate. Thus, without loss of generality, we have 
 $f =    x_1^2x_3 + x_2^2x_4.$

  {}
Next, assume the rank of $A$ is three.
 Assume $L_1, L_2, L_3$ are linearly independent, and write $L_4 = \lambda_1 L_1 + \lambda_2 L_2 + \lambda_3 L_3$. Rewrite $f$ as
    $$f = L_1\underset{m_1^2}{\underbrace{(x_1^2 + \lambda_1 x_4^2)}} + L_2 (\underset{m_2^2}{\underbrace{x_2^2 + \lambda_2 x_4^2}}) + L_3 (\underset{m_3^2}{\underbrace{x_3^2 + \lambda_3 x_4^2}}).$$
    Notice that $m_1, m_2, m_3$ are linearly independent, and thus after a change of coordinates we may write $f$ as
    $$f = L_1 x_1^2 + L_2 x_2^2 + L_3 x_3^2.$$
   Again, our non-degeneracy hypothesis implies that  $x_4$ appears in at least one of the $L_i$, so that 
 after a change of coordinates we may assume $L_1 = x_4$. So
    $$f = x_4x_1^2 + (a_{21} x_1 + \cdots + a_{24} x_4) x_2^2 + (a_{31} x_1 + \cdots + a_{34} x_4) x_3^2,$$
    for some constants $a_{ij}$. So
    $$A = \left(\begin{array}{cccc} 0 & 0 & 0 & 1 \\
    a_{21} & a_{22} & a_{23} & a_{24} \\
    a_{31} & a_{32} & a_{33} & a_{34} \\
    0 & 0 & 0 & 0
    \end{array} \right)$$
    Notice that because $A$ has rank $3$, the first three columns must span a space of dimension two.     
    We can now do changes of coordinates to get $a_{24} = a_{34} = 0$ without affecting the basic form of the matrix. For example, using Remark \ref{rowops}, we can 
  add a suitable multiple of the first row to the second row   to make $a_{24} = 0$; the corresponding column operation adds a multiple of column one to column two, changing only the second and third rows of the second column. Similarly, we can make $a_{34} = 0$, so that we can assume the matrix $A$ has the form
    $$A = \begin{pmatrix} 0 & 0 & 0 & 1 \\
    a_{1} & a_{2} & a_{3} & 0 \\
    b_{1} & b_{2} & b_{3} & 0 \\
    0 & 0 & 0 & 0
    \end{pmatrix}.$$
    The submatrix
    $$B = \begin{pmatrix}
    	a_{2} & a_{3} \\  b_{2} & b_{3}
    \end{pmatrix}$$
    is non-zero, since $A$ has rank $3$. This submatrix  represents  a cubic in  the two variables $x_2, x_3$. By  Proposition  \ref{2variable}, a linear change of coordinates involving only $x_2$ and $x_3$ will bring  the submatrix $B$ to one of the standard forms
     $$\begin{pmatrix} 1 & 0 \\ 0 & 0
    \end{pmatrix}, \,\,\,\,\,\
    \begin{pmatrix} 0 & 1 \\ 0 & 0
    \end{pmatrix}, \,\,\,\,\,\,\,
      \begin{pmatrix} 1 & 0  \\ 0 & 1
    \end{pmatrix}
$$
without changing any of the zero entries of the matrix $A$. 
 That is, $A$ can be assumed to be in  one of the following three forms:
$$
 \begin{pmatrix} 0 & 0 & 0 & 1 \\
    a & 1 & 0 & 0 \\
    b & 0 & 0 & 0 \\
    0 & 0 & 0 & 0
    \end{pmatrix}
    \,\,\,\,\,\,\,\,\,\,\,    \,\,\,\,\,\,\,\,\,\,\,
 \begin{pmatrix} 0 & 0 & 0 & 1 \\
    a & 0 & 1 & 0 \\
    b & 0 & 0 & 0 \\
    0 & 0 & 0 & 0
       \end{pmatrix}
    \,\,\,\,\,\,\,\,\,\,\,    \,\,\,\,\,\,\,\,\,\,\,
 \begin{pmatrix} 0 & 0 & 0 & 1 \\
    a & 1 & 0 & 0 \\
    b & 0 & 1 & 0 \\
    0 & 0 & 0 & 0
    \end{pmatrix}.
$$

    {}
 In each of these three cases, we can do row and column operations (according to the rules prescribed by Remark \ref{rowops})  to bring these to the following three forms, respectively:
 
   {}
$$
\begin{pmatrix} 
    0 & 0 & 0 & 1 \\
    0 & 1 & 0 & 0 \\
    1 & 0 & 0 & 0 \\
    0 & 0 & 0 & 0
    \end{pmatrix}
        \,\,\,\,\,\,\,\,\,\,\,    \,\,\,\,\,\,\,\,\,\,\,
\begin{pmatrix} 
    0 & 0 & 0 & 1 \\
    0 & 0 & 1 & 0 \\
    1 & 0 & 0 & 0 \\
    0 & 0 & 0 & 0
    \end{pmatrix}
        \,\,\,\,\,\,\,\,\,\,\,    \,\,\,\,\,\,\,\,\,\,\,
\begin{pmatrix} 
    0 & 0 & 0 & 1 \\
    0 & 1 & 0 & 0 \\
   	0 & 0 & 1 & 0 \\
    0 & 0 & 0 & 0
    \end{pmatrix}.
    $$
    
 {}
For example, to transform the first matrix,  we first  add a multiple of the second column to the first column (along with the corresponding row operation) to  obtain a matrix of the form
    $$\begin{pmatrix} 
    0 & \star & 0 & 1 \\
    0 & 1 & 0 & 0 \\
    b & 0 & 0 & 0 \\
    0 & 0 & 0 & 0
    \end{pmatrix},$$
    where $b\neq 0$ (as the rank is 3). 
    Since the last row is zero, we can simply add a multiple of the last column to the second column to eliminate $\star$, obtaining
    $$\begin{pmatrix} 
    0 & 0 & 0 & 1 \\
    0 & 1 & 0 & 0 \\
    b & 0 & 0 & 0 \\
    0 & 0 & 0 & 0
    \end{pmatrix}.$$
    Now multiplying the third row/column appropriately, we get
    $$\begin{pmatrix} 
    0 & 0 & 0 & 1 \\
    0 & 1 & 0 & 0 \\
    1 & 0 & 0 & 0 \\
    0 & 0 & 0 & 0
    \end{pmatrix}.$$

{} 
       Summarizing, the three matrices above correspond, respectively, to the following three cubic forms:
\begin{align*}
F_1 &= x_1^2x_4 + x_2^3 + x_1 x_3^2 \\ 
F_2 &= x_1^2x_4 + x_2^2x_3+ x_1 x_3^2 \\
F_3 &= x_1^2x_4 + x_2^3 +  x_3^3.
\end{align*}

    We have now completed step (1) of the proof of Theorem \ref{main}. Step (2),  showing that the five normal forms we have identified are all distinct, remains. For this, we need only compare those of the same rank, so it suffices to prove the following:

\begin{lemma}\label{distinct}
The rings corresponding to the three choices of $F_1, F_2, F_3$ above are pair-wise non-isomorphic.
\end{lemma}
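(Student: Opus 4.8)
The plan is to distinguish the three cubic forms
\[
F_1 = x_1^2x_4 + x_2^3 + x_1 x_3^2, \qquad
F_2 = x_1^2x_4 + x_2^2x_3 + x_1 x_3^2, \qquad
F_3 = x_1^2x_4 + x_2^3 + x_3^3
\]
by geometric invariants of the corresponding surfaces that are manifestly preserved under projective change of coordinates, so that no direct manipulation of the $GL_4$-action is needed. First I would record that all three surfaces have a single (isolated) singular point, namely $[0:0:0:1]$, by Proposition \ref{singloc} together with a direct check of the partial derivatives; this is already asserted in the statement of Theorem \ref{main}, so the distinguishing must come from finer data at or near that point.

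The cleanest invariant to use is the \emph{tangent cone} at the singular point, equivalently the structure of the degree-$2$ part of $F_i$ after setting $x_4=1$ and translating the singularity to the origin in the affine chart $x_4\neq 0$. Dehomogenizing, $F_i$ becomes $x_1^2 + (\text{cubic terms})$ near $[0:0:0:1]$; so in every case the tangent cone is the (non-reduced) double plane $x_1^2=0$, and one must instead look at the next term. I would argue as follows: the hypersurface singularity at $[0:0:0:1]$ has local equation $x_1^2 + g(x_1,x_2,x_3)$ where $g$ is the cubic part, and I would compute, for each $F_i$, the cubic $g$ restricted to the tangent plane $x_1=0$: for $F_1$ it is $x_2^3$, for $F_2$ it is $x_2^2 x_3$, for $F_3$ it is $x_2^3 + x_3^3$. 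These are exactly the three cases of Lemma \ref{2variable} — one, two, or three distinct roots in $\mathbb P^1$ — and since a projective isomorphism of the surfaces must fix the singular point and induce a linear isomorphism of its tangent plane carrying one binary cubic to a scalar multiple of the other, the number of distinct roots of this binary cubic is a projective invariant. Hence $F_1, F_2, F_3$ are pairwise non-isomorphic. (One should note $F_2$ is the only non-normal candidate? No — here all three are normal with an isolated singularity, so this argument is the right level of granularity.)

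The step I expect to require the most care is justifying that a projective isomorphism between two of these surfaces really must respect the local data I am using: that it sends singular point to singular point (immediate, since each has a unique singular point, so the ambient linear map permutes singular loci), that it therefore restricts to a linear isomorphism of the Zariski tangent space at that point, and that under such a linear isomorphism the "leading binary cubic" $g|_{x_1=0}$ is well-defined up to linear change of the two tangent-plane coordinates and rescaling — so that its splitting type (number of distinct linear factors) is genuinely invariant. This is a standard local-algebra fact (the associated graded of the local ring, or the projectivized tangent cone together with the next graded piece, is intrinsic), but I would spell it out carefully rather than wave at it, since it is the logical crux. An alternative, if one prefers a more global argument, is to count lines through the singular point lying on each surface, or to observe that $F_3$'s surface contains a whole pencil-like family of lines through $[0:0:0:1]$ (coming from the factorization $x_2^3+x_3^3 = \prod(x_2+\zeta x_3)$ over $k$) of a different cardinality than for $F_1$ or $F_2$; but the tangent-cone argument above is the most economical and I would make it the main line.
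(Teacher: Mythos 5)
Your argument is correct, and it reaches the decisive comparison by a genuinely different mechanism than the paper's, even though both proofs end up staring at the same three binary cubics $x_2^3$, $x_2^2x_3$, $x_2^3+x_3^3$. Both start identically: each $F_i$ has singular locus exactly $[0:0:0:1]$, so an equivalence must preserve that point. The paper then argues \emph{globally}: writing the coordinate change as $x_k\mapsto L_k$ with $L_4=d_1x_1+d_2x_2+d_3x_3+x_4$, it compares the coefficient of $x_4$ on both sides ($L_1^2x_4$ versus $x_1^2x_4$) to force $L_1=x_1$, so the equivalence restricts to an isomorphism of the hyperplane sections $\{x_1=0\}\cap\mathcal X$, i.e.\ of $k[x_2,x_3,x_4]/\langle x_2^3\rangle$, $k[x_2,x_3,x_4]/\langle x_2^2x_3\rangle$, $k[x_2,x_3,x_4]/\langle x_2^3+x_3^3\rangle$, which are separated by nilpotency index and reducedness. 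You instead work \emph{locally} at the singularity, identifying the plane $x_1=0$ as the reduced tangent cone and the binary cubic as the cubic part of the local equation restricted to it, separated by the number of distinct roots. The step you rightly flag as the crux is true and not hard to nail down in the only case you need (a projective-linear map fixing $[0:0:0:1]$, which on the chart $x_4=1$ gives $f\circ\varphi=u\,f'$ for a unit $u$ and an analytic $\varphi$ with linear part $\varphi_1$): comparing degree-$2$ parts gives $\ell\circ\varphi_1=c\,\ell'$, and comparing degree-$3$ parts gives $g\circ\varphi_1=u(0)\,g'+u_1\ell'^2$ --- in characteristic two, $(\ell\circ\varphi)^2=\sum_j\ell(\varphi_j)^2$ has only even-degree terms, so contributes nothing in degree $3$ --- and restricting to $\ell'=0$ yields exactly the asserted equivalence of binary cubics up to linear substitution and scalar. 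What each approach buys: the paper's is more elementary and self-contained (one coefficient comparison, no local analysis), while yours explains \emph{why} the plane $x_1=0$ is canonical (it is intrinsic to the singularity) and generalizes beyond this ad hoc situation. Your parenthetical alternative is also sound and arguably the cheapest rigorous route: a direct substitution shows the surfaces contain exactly $1$, $2$, and $3$ lines through the singular point for $F_1$, $F_2$, $F_3$ respectively, a manifestly projective invariant that avoids the local lemma altogether.
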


\begin{proof}
The singular locus for each of $F_1, F_2, F_3$ is defined by the ideal $\langle x_1, x_2, x_3\rangle$. 
Therefore, any change of variables that sends $F_i$ to $F_j$ for $i, j \in \{1, \, 2, \, 3\}$ has to send $\langle x_1, x_2, x_3\rangle$ to $\langle x_1, x_2, x_3\rangle$.
This means any such change of variables, without loss of generality,  has the form 
$$
g \left[\begin{array}{c} x_1 \\ x_2 \\ x_3 \\ x_4 \\ \end{array}\right] 
=\left[\begin{array}{c} L_1 \\ L_2 \\ L_3 \\ L_4 \\ \end{array}\right]=\left[\begin{array}{c} a_1x_1+a_2x_2+a_3x_3 \\ b_1x_1+b_2x_2+b_3x_3\\ c_1x_1+c_2x_2+c_3x_3 \\ d_1 x_1+d_2x_2+d_3x_3+ x_4 \\ \end{array}\right].
$$
We claim that any such isomorphism $g$ taking $F_i$ to $F_j$  must fix $x_1$. To see this, we substitute $L_1, L_2, L_3, L_4$ in for $x_1, x_2, x_3, x_4$ into  $F_i$ and compare to $F_j$. The terms that contain $x_4$ in $g\cdot F_i$ are $L_1^2 x_4$ whereas in $F_j$ there is only $x_1^2x_4$.  This means that  $L_1 = x_1$, so that any linear change of coordinates taking $F_i$ to $F_j$ must  fix $x_1$.

It is now easy to see that the hypersurfaces defined by  $F_1$, $F_2$ and $F_3$ are projectively distinct. First, 
suppose  that 
$g\cdot F_1=F_2$.  Since $g$ fixes $x_1$, it  induces isomorphisms
$$
\frac{k[x_1, x_2, x_3, x_4]}{\langle x_1, F_1 \rangle} \cong\frac{k[x_1, x_2, x_3, x_4]}{\langle g\cdot x_1, g\cdot F_1 \rangle} \cong \frac{k[x_1, x_2, x_3, x_4]}{\langle  x_1, F_2 \rangle }
$$
or
$$
\frac{k[x_2, x_3, x_4]}{\langle  x_2^3 \rangle }\cong \frac{k[x_2, x_3, x_4]}{ \langle x_2^2x_3 \rangle }.
$$
But these are obviously not isomorphic, as they exhibit different index of nilpotency. So $F_1$ and $F_2$ cut out projectively distinct hypersurfaces in $\mathbb P^3$.

Similarly, we can see that  if there is a linear change of coordinates $g$ such that  $g\cdot F_2=F_3$, we would get an isomorphism
$$
\frac{k[x_2, x_3, x_4]}{\langle x_2^2x_3 \rangle }\cong \frac{k[x_2, x_3, x_4]}{\langle  x_2^3+x_3^3 \rangle },
$$
again a contradiction since the ring on the right is reduced. Finally, 
 if we assume $g\cdot F_3=F_1$, we would get an isomorphism
$$
\frac{k[x_2, x_3, x_4]}{\langle  x_2^3 \rangle }\cong \frac{k[x_2, x_3, x_4]}{\langle  x_2^3+x_3^3 \rangle },
$$
which is a contradiction for the same reason. \end{proof}
\end{proof}

\begin{remark}\label{p>2} We conclude by sketching an alternative proof of the fact that a smooth cubic surface in characteristic $p>2$ is Frobenius split (\cite[5.5]{hara.rational-singularities}), using Beauville's theorem. Fix 
a smooth cubic surface  $\mathcal X$ over an algebraically closed field of char $p>0$.   Choose a pencil of hyperplane sections, blown up at the line they meet. This gives a polarized family of elliptic curves over a dense open set of the affine line (after throwing away any singular member). Since a coarse moduli space exists \cite[p.~ 206]{MumfordSuominen}, the base of this family must map to the $j$-line, and so its image is connected. Now, if $\mathcal X$ is not Frobenius split, the members of this family of elliptic curves are all supersingular. But there are only finitely many supersingular $j$-invariants, so in fact, the map to the $j$-line is a constant map. So all the smooth hyperplane sections of $\mathcal X$ are isomorphic. By Theorem \ref{beauville}, our cubic hypersurface must be projectively equivalent to  one defined by $\sum_{i}x_i^{p^e+1}$, which means 
that  $\mathcal X$ must be projectively equivalent to the Fermat cubic and the characteristic must be two. 
\end{remark}

\section{Frobenius Splitting and Lines on Cubic Surfaces}\label{geometry}

{}
\noindent

In this section, we use our classification to deduce a characterization of  Frobenius splitting of cubic surfaces   in terms of their configuration of lines. Our results are  valid whether or not the surface is smooth.  In the smooth case, our results overlap with those of Hara  \cite[5.5]{hara.rational-singularities},
 but our approach different, and we think, more elementary.

Recall that a point on a cubic surface is an \emph{Eckardt point} if it is an intersection point  of three distinct lines on the surface. Generically, we expect three lines in a plane to form a 
``triangle''---that is, we expect that they do {\it not} meet in a single point. Thus the cubic surfaces that contain Eckardt points are more special than a generic cubic surface. 

{}
Non-Frobenius split cubic surfaces have the most degenerate possible configurations of lines and thus the maximal possible number of Eckardt points:

\begin{theorem}\label{eckardt}
Consider a (possibly singular)    cubic surface $\mathcal X$ over a field of characteristic two that is not Frobenius split. 
If two lines $\ell_1$ and $\ell_2$ on $\mathcal X$  meet at a point, then either one of the lines is a double line,
or the intersection is an Eckardt point. That is, a non-Frobenius split cubic surface of characteristic two contains no triangles.
\end{theorem}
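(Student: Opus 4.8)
The plan is to argue by contradiction, using Fedder's deformation result (Theorem \ref{deform}) to transfer non-Frobenius-splitness from $\mathcal{X}$ down to a plane section, and then invoking the classification of non-$F$-pure plane cubics. So suppose $\ell_1 \neq \ell_2$ are lines on $\mathcal{X}$ meeting at a point $P$, that neither of them is a double line, and that $P$ is not an Eckardt point; I will derive a contradiction. Since $\ell_1$ and $\ell_2$ are distinct and meet, they span a unique plane $H$. If $H \subseteq \mathcal{X}$, then every one of the infinitely many lines of $H$ through $P$ lies on $\mathcal{X}$, so $P$ is an Eckardt point, contrary to assumption. Hence $H$ is not a component of $\mathcal{X}$, which means the linear form $\ell$ cutting out $H$ is a nonzerodivisor on the standard-graded Gorenstein ring $R = k[x_1,\dots,x_4]/\langle f \rangle$, where $f$ defines $\mathcal{X}$. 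Therefore $C := \mathcal{X} \cap H$ is a genuine plane cubic curve, cut out in $H \cong \mathbb{P}^2$ by $f|_H \neq 0$. Because $\ell_1$ and $\ell_2$ are distinct lines contained in $C$, their defining linear forms on $H$ are coprime and both divide $f|_H$, so $f|_H$ is a product of three linear forms and $C = \ell_1 + \ell_2 + \ell_3$ for some residual line $\ell_3$ (possibly equal to $\ell_1$ or $\ell_2$). Finally, applying Theorem \ref{deform} to $R$ and its regular element $\ell$: if the coordinate ring $R/\langle \ell \rangle$ of $C$ were $F$-pure, then $R$ would be $F$-pure, i.e., $\mathcal{X}$ would be Frobenius split — contrary to hypothesis. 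Thus $C$ is a non-$F$-pure plane cubic that is a union of three lines.

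Next I would use that a union of three lines in $\mathbb{P}^2$ is $F$-pure exactly when the three lines are distinct and not concurrent: such a configuration is projectively $xyz$ (its three vertices are not collinear, so they can be moved to the coordinate points), and $xyz$ has the square-free monomial $xyz$, hence is $F$-pure by Proposition \ref{parameter} and Remark \ref{Hart}; while three concurrent lines, a line plus a double line, and a triple line can each be written in at most two variables after a coordinate change and so are non-$F$-pure by Lemma \ref{2variable} (this is also contained in Theorem \ref{3variable}). Consequently our non-$F$-pure $C = \ell_1 + \ell_2 + \ell_3$ is \emph{not} a triangle; it is either a triple line, or three distinct concurrent lines, or a line together with a double line. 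In the triple-line case $\ell_1 = \ell_2$, contradicting distinctness. In the concurrent case, the common point of the three lines is $\ell_1 \cap \ell_2 = P$, so $\ell_3$ is a line on $\mathcal{X}$ through $P$ distinct from $\ell_1$ and $\ell_2$, whence $P$ is an Eckardt point — again a contradiction. So in these two cases we are done.

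The remaining case, where $C$ is a line plus a double line, is where I expect the real work to lie. Here, since $\ell_1 \neq \ell_2$, the doubled component is necessarily $\ell_1$ or $\ell_2$ — call it $\ell$ — and $\ell_3 = \ell$; geometrically, $H$ is tangent to $\mathcal{X}$ all along $\ell$. This tangency is invisible to the single section $C$, so it cannot be concluded directly: the conclusion ``one of the lines is a double line'' is precisely what absorbs the subcase in which $\ell$ is a multiple component of $\mathcal{X}$, and what remains is the subcase in which $\ell$ is \emph{not} a double line of $\mathcal{X}$. In that situation $\mathcal{X}$ is generically smooth along $\ell$ yet has constant tangent plane ($=H$) along it, so $\ell$ is contracted by the Gauss map of $\mathcal{X}$; for a cubic surface this can only happen for a line through a singular point of cone type, and I would finish by checking — against the explicit list of non-Frobenius split surfaces in Theorem \ref{main}, Theorem \ref{3variable}, and Lemma \ref{2variable} (where such a tangent line occurs only on the cones and on the reducible surfaces with a plane component) — that $P$ is then forced to be the vertex of a cone, hence an Eckardt point. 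Concretely, one can simply run this last case directly against the finite classification. Along the way I would also record the elementary projective-geometry inputs used above: two distinct coplanar lines meet in exactly one point and span a unique plane; the three vertices of an honest triangle of lines are not collinear (so such a triangle is projectively $xyz$); and the residual of a degree-two subscheme inside a plane cubic is a line.
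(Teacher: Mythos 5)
Your argument is essentially the paper's own proof: pass to the plane $H$ spanned by the two lines, use Theorem \ref{deform} to conclude that the section $\mathcal X \cap H$ is a non-$F$-pure union of three lines, and rule out the triangle configuration via the classification (equivalently, via the square-free-monomial criterion of Proposition \ref{parameter} and Remark \ref{Hart}). The one place you diverge is the final case: in the statement, ``double line'' means a line occurring with multiplicity two in the plane section $\mathcal X \cap H$ (this is exactly how the paper's proof reads it --- ``the third line is one of $\ell_1$ or $\ell_2$''), so once you have reduced to the three degenerate configurations, the line-plus-double-line case \emph{is} the first alternative of the conclusion and the proof is already finished; the extra analysis you anticipate there (Gauss map, multiple components of $\mathcal X$) is not needed, which is just as well since you only sketch it. One small credit to you: your explicit treatment of the possibility $H \subseteq \mathcal X$, which is needed to ensure the linear form cutting out $H$ is a regular element before invoking Theorem \ref{deform}, is a point the paper passes over silently.
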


\begin{proof} 
Fix two  intersecting lines $\ell_1$ and $\ell_2$ on a  (possibly singular, or even degenerate) cubic surface $\mathcal X$ that is not Frobenius split.
Let $H$ be the plane spanned by $\ell_1$ and $\ell_2$. Since $\mathcal X$ has degree three, we know $\mathcal X \cap H$ contains exactly one more line, and   the coordinate ring of the hyperplane section   $\mathcal X \cap H$ is isomorphic to $k[x, y, z]/\langle xyL\rangle$ where  $L$ is a linear form.  By Theorem \ref{deform}, the ring $ k[x, y, z]/\langle xyL\rangle$ cannot be $F$-pure, for if it were,  the  coordinate ring of $\mathcal X$ would also be $F$-pure.
By our classification theorem for non-Frobenius split cubics in three variables, the cubic curve $\mathcal X \cap H$ must therefore be degenerate, since
our list in Theorem \ref{3variable} contains no triple of lines. That is,
the third linear form $L$ must lie in the span of $\{x, y\}$. So either the third line is one of $\ell_1$ or $\ell_2$ (this is the case where $L$ is a scalar multiple of $x$ or $y$) or the three lines are distinct, and 
meet at the Eckardt point $[0:0:1]$. 
\end{proof}

We deduce a cute corollary in the smooth case:

\begin{corollary}\label{EckSmooth} A smooth cubic surface of characteristic two  is Frobenius split {\it unless} it is ``triangle-free''---that is, unless {\it  each and every} collection of coplanar lines on it meet at one point. 
In particular, the only smooth cubic surface that is not Frobenius split (up to projective change of coordinates) is the Fermat cubic defined by $x^3+y^3+z^3+w^3$ in characteristic two. 
\end{corollary}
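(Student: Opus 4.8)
To establish the first assertion I would invoke the contrapositive of Theorem \ref{eckardt}. Suppose $\mathcal X$ is a smooth cubic surface that is \emph{not} triangle-free, so there are three distinct coplanar lines $\ell_1,\ell_2,\ell_3$ on $\mathcal X$ with no point lying on all three. The plane section $\mathcal X\cap\langle\ell_1,\ell_2\rangle$ is a cubic containing these three distinct lines, so it equals their reduced union $\ell_1+\ell_2+\ell_3$; in particular no line of $\mathcal X$ in this plane is a double line, so the double-line alternative of Theorem \ref{eckardt} does not arise. Thus if $\mathcal X$ were not Frobenius split, Theorem \ref{eckardt} applied to the meeting pair $\ell_1,\ell_2$ would force a third line of $\mathcal X$ through $\ell_1\cap\ell_2$, necessarily $\ell_3$; hence $\ell_1,\ell_2,\ell_3$ would be concurrent, a contradiction. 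So $\mathcal X$ is Frobenius split, which is the first statement.

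For the ``in particular'', the plan is to reduce the uniqueness of the Fermat cubic surface to the known uniqueness of the supersingular elliptic curve, via hyperplane sections. Let $\mathcal X$ be smooth and not Frobenius split, with homogeneous coordinate ring $R=k[x_1,x_2,x_3,x_4]/\langle f\rangle$, which is a hypersurface ring, hence Gorenstein. First I would note that for every nonzero linear form $\ell$ the image $\bar\ell\in R$ is a nonzerodivisor, and that $R/\langle\bar\ell\rangle$ -- the coordinate ring of the hyperplane section -- is again not $F$-pure, since otherwise Theorem \ref{deform} applied to the regular element $\bar\ell$ of the Gorenstein ring $R$ would force $R$ itself to be $F$-pure. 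Next I would use that a general hyperplane section of a smooth cubic surface is a \emph{smooth} plane cubic curve: the dual variety of $\mathcal X$, being the image of the two-dimensional $\mathcal X$ under its Gauss map, is a proper closed subvariety of $(\mathbb{P}^3)^*$, so a general plane is not tangent to $\mathcal X$. Combining these produces a smooth, non-$F$-pure plane cubic curve, which by Theorem \ref{3variable} must be projectively equivalent to $x^3+y^3+z^3$ -- a supersingular elliptic curve. Hence \emph{every} smooth hyperplane section of $\mathcal X$ is a supersingular elliptic curve, and since there is exactly one such curve in characteristic two up to isomorphism \cite[p.~260]{Husemoller}, all smooth hyperplane sections of $\mathcal X$ are mutually isomorphic. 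Beauville's Theorem \ref{beauville} then forces $\mathcal X$ to be projectively equivalent to $\sum_i x_i^{2^e+1}$ for some $e\ge 0$; comparing degrees yields $e=1$, so $\mathcal X$ is the Fermat cubic.

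Almost every step of this route is routine, so the point I would be most careful about is step two, the smoothness of the general hyperplane section, since in characteristic two the Gauss map of $\mathcal X$ can be inseparable (for the Fermat cubic it is purely inseparable), and one cannot quote Bertini uncritically. Separability, however, is irrelevant here: the Gauss image is two-dimensional merely because its source is, so the dual variety is a proper subvariety of $(\mathbb{P}^3)^*$ in any case, which is all that is needed.

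Should one wish to avoid Beauville's theorem, I would instead argue through the $27$ lines, in the spirit of Coskun's proof. By the first part a smooth non-Frobenius split cubic surface $\mathcal X$ is triangle-free, so all $45$ of its tritangent planes cut out triples of \emph{concurrent} lines; since at most three lines of a smooth cubic surface pass through any point, the five tritangent planes through a fixed line meet it in five distinct Eckardt points, and an incidence count gives $27\cdot 5/3=45$ Eckardt points on $\mathcal X$. One then appeals to the classification of smooth cubic surfaces carrying this many Eckardt points -- equivalently, of extremal automorphism type in characteristic two; see \cite[1.1]{Homma} and \cite[20.2]{Hirschfeld} -- to conclude $\mathcal X$ is the Fermat cubic. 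The genuine obstacle in \emph{this} route is precisely that last step: passing from the very symmetric configuration of lines to an explicit normal form needs either those classical inputs or a fairly involved combinatorial analysis of the $27$ lines, which is why routing through elliptic curves and Beauville's theorem is the more economical choice.
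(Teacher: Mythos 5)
Your proposal is correct and coincides with the paper's own ``Alternate Proof'' of Corollary \ref{EckSmooth}: the first statement is the contrapositive of Theorem \ref{eckardt} (with your observation that the double-line alternative cannot occur on a smooth surface), and the uniqueness statement follows by combining Theorem \ref{deform}, the uniqueness of the supersingular elliptic curve in characteristic two, and Beauville's Theorem \ref{beauville}---your added care about the existence of smooth hyperplane sections via the dual variety is a detail the paper elides. The paper's primary ``Classical Proof'' takes a different, more elementary route, showing by an explicit coordinate computation with the six blown-up points and the lines $\ell_{ij}$ that at most one triangle-free configuration exists; but since the Beauville argument also appears verbatim in the paper, no further comparison is needed.
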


\begin{remark}
Corollary \ref{EckSmooth} holds as stated in any positive characteristic, but somewhat vacuously: in characteristic $p>2$,  every cubic surface   is Frobenius split (\cite[5.5]{hara.rational-singularities}; see also Remark \ref{p>2}) and there are no triangle-free cubic surfaces. The latter statement is likely well-known but follows easily from our classical argument below.
\end{remark}

The second statement of
Corollary  \ref{EckSmooth} follows immediately from  Theorem \ref{eckardt} using the main theorem of \cite{Homma} geometrically  characterizing the Fermat cubic surface in characteristic two. However,  we include two straightforward proofs, the first quite classical (and reproving part of \cite[1.1]{Homma}) and the second using Beauville's theorem (Theorem \ref{beauville}).

\begin{proof}[Classical Proof of Corollary \ref{EckSmooth}] The first statement is immediate from the theorem, since each tri-tangent planar section of a smooth cubic surface  is either a triangle or a triple of lines meeting at an Eckardt point. Thus for the second statement,  because  the Fermat cubic surface is not Frobenius split (Theorem \ref{fedder}), it suffices to show that
 {\it  there is at most one cubic  surface without  any triangles.}{\footnote{Our argument will also show that such a surface can exist only in characteristic two.}}
  
Recall that a cubic surface can be described  as $\mathbb P^2$ blown up at six points $\{p_1, \dots, p_6\}$, with no three co-linear, and no five on a conic (for example, see \cite[V \S4]{hartshorne-algebraic-geometry}). 
Each pair of these points determines a line $\ell_{ij}$ on the cubic surface $\mathcal X$---namely the birational transform  on $\mathcal X$ of the line  through $p_i$ and $p_j$ in $\mathbb P^2$. Note that the lines $\ell_{ij}$ and $\ell_{kl}$ intersect on $\mathcal X$ if and only if $\{i, j\} \cap \{k, l\} = \emptyset.$ 
We claim that the fact that these fifteen lines can never form a triangle on $\mathcal X$  imposes so many conditions that the configuration of six points is uniquely determined.

To see this, choose coordinates so that $p_1=[1:0:0], \, p_2=[0:1:0], \, p_3=[0:0:1]$, and $p_4=[1:1:1]$. 
These determine  three pairs of intersecting lines on the cubic, whose intersection points are $[1:1:0], \, [0:1:1]$, and $[1:0:1]$.
The line $\ell_{56}$ must intersect both lines in each of these intersecting pairs, and hence $\ell_{56}$ must pass through each of these three points, for otherwise we would have a triangle on the cubic surface.  Thus $\ell_{56}$ is uniquely determined{\footnote{This can happen {\it only} in characteristic two! The three points are not  colinear in any other characteristic.}}
 as  the line $\{x+y+z=0\}$. So  $p_5=[a:a+1:1]$ and $p_6=[b:b+1:1]$, for some non-zero scalars $a, b$. 
Because $\ell_{25},  \ell_{46}$ and $ \ell_{13} $($= \mathbb V(y)$) are concurrent,
 we compute that 
 $$
\ell_{25}\cap \ell_{13} =   \ell_{46} \cap \ell_{13}  \,\,\,\,\,\,\, {\text{ so that }} \,\,\,\,     [a: 0:  1] =[1: 0 : b].
$$
 Likewise, 
  because the lines $\ell_{15}, \ell_{46}$ and $ \ell_{23} $($= \mathbb V(x)$) are concurrent, 
 we have
 $$
 \ell_{15}\cap \ell_{23} =   \ell_{46} \cap \ell_{23}  \,\,\,\,\,\,\, {\text{ so that }} \,\,\,\,     [0: a+1:  1] =[0:1:b+1].
 $$
 Therefore, the constants $a$ and $b$ satisfy the relations  $ab=1$ and $a+b=1$. 
 This determines  $p_5$ and $p_6$ as $[\omega:  \omega+1: 1] $ and $[\omega^2, \omega^2+1: 1]$, where $\omega$ is a primitive cube root of unity.
\end{proof}

\begin{proof}[Alternate Proof of Corollary]\label{alt}
Alternatively,  the second statement in Corollary \ref{EckSmooth} can be deduced from  Beauville's theorem.  We know that if $\mathcal X$ is not Frobenius split, then the same is true for every hyperplane section by Theorem \ref{deform}. So the smooth hyperplane sections of $\mathcal X$ are all supersingular elliptic curves of characteristic two
(by  Example \ref{fedder-elliptic: R}), and hence isomorphic  \cite[p.~~ 260]{Husemoller}.  Now Beauville's theorem (Theorem \ref{beauville})  implies that the cubic surface $\mathcal X$ is projectively equivalent to $x^3+y^3+z^3+w^3$.
\end{proof}

{\small
\bibliographystyle{amsalpha}
\bibliography{bibdatabase}
}

\end{document}